%2/9/2024

\documentclass[11pt]{amsart}
\usepackage[leqno]{amsmath}
\usepackage{amssymb,latexsym,soul,cite,amsthm,color,enumitem,graphicx,mathtools,microtype,accents}
\usepackage[colorlinks=true,urlcolor=crimson,citecolor=crimson,linkcolor=crimson,linktocpage,pdfpagelabels,bookmarksnumbered,bookmarksopen]{hyperref}
\definecolor{crimson}{rgb}{0.86, 0.08, 0.24}
\usepackage[english]{babel}
\usepackage[left=2.7cm,right=2.7cm,top=2.4cm,bottom=2.5cm]{geometry}

\numberwithin{equation}{section}

\newtheorem{theorem}{Theorem}[section]
\theoremstyle{plain}
\newtheorem{lemma}[theorem]{Lemma}
\theoremstyle{plain}
\newtheorem{proposition}[theorem]{Proposition}
\theoremstyle{plain}

\newtheorem{definition}[theorem]{Definition}
\theoremstyle{definition}
\newtheorem{remark}[theorem]{Remark}

\newcommand{\N}{{\mathbb N}}

\newcommand{\R}{{\mathbb R}}
\newcommand{\eps}{\varepsilon}
\newcommand{\beq}{\begin{equation}}
\newcommand{\eeq}{\end{equation}}
\renewcommand{\le}{\leqslant}
\renewcommand{\ge}{\geqslant}

\newcommand{\w}{W^{s,p}_0(\Omega)}
\newcommand{\fpl}{(-\Delta)_p^s\,}
\newcommand{\cs}{C^0_s(\overline\Omega)}
\newcommand{\ds}{{\rm d}_\Omega^s}
\newcommand{\p}{p^*_s}

\makeatletter
\newcommand{\leqnomode}{\tagsleft@true}
\newcommand{\reqnomode}{\tagsleft@false}
\makeatother

\newenvironment{enumroman}{\begin{enumerate}

}{\end{enumerate}}

\title[Sublinear fractional $p$-Laplacian]{On a doubly sublinear fractional $p$-Laplacian equation}

\author[A.\ Iannizzotto, S.\ Mosconi]{Antonio Iannizzotto, Sunra Mosconi}

\address[A.\ Iannizzotto]{Dipartimento di Matematica e Informatica
\newline\indent
Universit\`a degli Studi di Cagliari
\newline\indent
Via Ospedale 72, 09124 Cagliari, Italy}
\email{antonio.iannizzotto@unica.it}

\address[S.\ Mosconi]{Dipartimento di Matematica e Informatica
\newline\indent
Universit\`a degli Studi di Catania
\newline\indent
Viale A.\ Doria 6, 95125 Catania, Italy}
\email{mosconi@dmi.unict.it}

\subjclass[2010]{35R11, 47H11, 35A15.}
\keywords{Fractional $p$-Laplacian, Fractional Sobolev spaces, Variational methods}

\begin{document}

\begin{abstract}
We prove a bifurcation result for a Dirichlet problem driven by the fractional $p$-Laplacian (either degenerate or singular), in which the reaction is the difference between two sublinear powers of the unknown. In our argument, a fundamental role is played by a Sobolev vs.\ H\"older minima principle, already known for the degenerate case, which here we extend to the singular case with a simpler proof.
\end{abstract}

\maketitle

\begin{center}
Version of \today\
\end{center}

\section{Introduction and main result}\label{sec1}

\noindent
The present note is mainly devoted to the study of the following Dirichlet type problem for a nonlinear, nonlocal elliptic equation:
\[(P_\lambda) \qquad \begin{cases}
\fpl u = \lambda u^{q-1}-u^{r-1} & \text{in $\Omega$} \\
u > 0 & \text{in $\Omega$} \\
u = 0 & \text{in $\R^N\setminus \Omega$.}
\end{cases}\]
Here $\Omega\subset\R^N$ ($N\ge 2$) is a bounded domain with a $C^{1,1}$-smooth boundary, $p>1$, $s\in(0,1)$ are s.t.\ $ps<N$, and the leading operator is the $s$-fractional $p$-Laplacian, namely, the gradient of the functional
\[u \mapsto \frac{1}{p}\iint_{\R^N\times\R^N}\frac{|u(x)-u(y)|^p}{|x-y|^{N+ps}}\,dx\,dy,\]
defined in a convenient fractional Sobolev space $\w$. If $p\ge 2$ and $u$ is smooth enough, such operator admits the pointwise representation
\[\fpl u(x) = 2\lim_{\eps\to 0^+}\int_{B_\eps^c(x)}\frac{|u(x)-u(y)|^{p-1}(u(x)-u(y))}{|x-y|^{N+ps}}\,dy.\]
The reaction involves two $(p-1)$-sublinear powers of $u$ with exponents $1<r<q<p$, and depends on a parameter $\lambda>0$. 
\vskip2pt
\noindent
The literature on boundary value problems driven by the fractional $p$-Laplacian includes a wide variety of results, starting from \cite{ILPS} and ensuring the existence, multiplicity, or uniqueness of weak solutions via variational and/or topological methods, see for instance \cite{BS,CMSS,DPQ,FI,IL,IM1,IMP,PSY}. In particular, \cite{IMP} focuses on the logistic equation, involving a reaction similar to that of $(P_\lambda)$, but with $1<q<r$ and $r>p$.
\vskip2pt
\noindent
Our main result states that problem $(P_\lambda)$ may admit two, one, or no solutions, depending on the value of $\lambda$, with precise order relations between solutions and convergence at a threshold value of the parameter. For a precise definition of solution we refer to Section \ref{sec2}.

\begin{theorem}\label{pqr}
Let $\Omega\subset\R^N$ ($N\ge 2$) be a bounded domain with a $C^{1,1}$-smooth boundary, $1<r<q<p$, $s\in(0,1)$ s.t.\ $ps<N$. Then, there exists $\lambda_*>0$ s.t.\
\begin{enumroman}
\item\label{pqr1} for all $0<\lambda<\lambda_*$ $(P_\lambda)$ has no solution;
\item\label{pqr2} $(P_{\lambda_*})$ has at least one solution $u_{\lambda_*}$;
\item\label{pqr3} for all $\lambda>\lambda_*$, $(P_\lambda)$ has a biggest solution $u_\lambda$ and at least another solution $v_\lambda$ s.t.\ $u_\lambda>v_\lambda$ in $\Omega$;
\item\label{pqr4} the map $\lambda\mapsto u_\lambda$ is increasing, i.e., if $\lambda>\mu\ge \lambda^*$   then $u_\lambda>u_\mu$ in $\Omega$.
\end{enumroman}
\end{theorem}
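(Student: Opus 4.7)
My plan is to combine sub/super-solution methods with variational techniques, with the Sobolev vs.\ H\"older minima principle of this paper playing the central role in producing a second solution. Set $f_\lambda(t)=\lambda t^{q-1}-t^{r-1}$, extended by $0$ for $t\le 0$, with primitive $F_\lambda$, and consider on $\w$ the energy
\[
J_\lambda(u)=\frac{1}{p}[u]_{s,p}^p-\int_\Omega F_\lambda(u^+)\,dx=\frac{1}{p}[u]_{s,p}^p-\frac{\lambda}{q}\|u^+\|_q^q+\frac{1}{r}\|u^+\|_r^r,
\]
which is coercive and weakly lower semicontinuous since $q<p$. Put $\Lambda=\{\lambda>0:(P_\lambda)\text{ admits a solution}\}$ and $\lambda_*=\inf\Lambda$.

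For \emph{(i)}, testing any solution $u$ with itself gives $[u]_{s,p}^p+\|u\|_r^r=\lambda\|u\|_q^q$; the interpolation $\|u\|_q^q\le C\|u\|_r^{\theta q}[u]_{s,p}^{(1-\theta)q}$ (with $\tfrac{1}{q}=\tfrac{\theta}{r}+\tfrac{1-\theta}{\p}$), followed by Young's inequality, forces $\lambda\ge c>0$, hence $\lambda_*>0$. For the maximal solution in \emph{(iii)}, the sublinear problem $\fpl w=\lambda w^{q-1}$ has a unique positive solution $w_\lambda$ which, by weak comparison, is a super-solution of $(P_\lambda)$ dominating every positive solution; monotone Picard iteration downward from $w_\lambda$ then produces the maximal solution $u_\lambda$. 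For \emph{(iv)}, if $\mu\in[\lambda_*,\lambda)$ then $u_\mu$ is a sub-solution of $(P_\lambda)$ (since $f_\lambda>f_\mu$ on $(0,\infty)$), so iteration in the order interval $[u_\mu,w_\lambda]$ combined with the maximality of $u_\lambda$ gives $u_\lambda\ge u_\mu$, strengthened to $u_\lambda>u_\mu$ in $\Omega$ by the strong maximum principle.

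The heart of the proof is the \emph{second solution} $v_\lambda$. Truncate $f_\lambda$ above $u_\lambda$ to obtain a modified energy $\widetilde J_\lambda$ whose positive critical points automatically lie in $[0,u_\lambda]$ (by weak comparison) and therefore solve $(P_\lambda)$. Order-interval arguments and the maximality of $u_\lambda$ make $u_\lambda$ a local minimizer of $\widetilde J_\lambda$ in $\cs$, and the Sobolev vs.\ H\"older minima principle upgrades this to a local minimizer in $\w$. The same interpolation plus Young's inequality, exploiting the identity $(1-\theta)q\cdot r/(r-\theta q)=\p$, yields
\[
J_\lambda(u)\ge \tfrac{1}{p}[u]_{s,p}^p-C[u]_{s,p}^{\p}\ge 0\quad\text{for $[u]_{s,p}$ small,}
\]
so $0$ is also a strict local minimizer of $J_\lambda$ (and of $\widetilde J_\lambda$) in $\w$. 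Since $\widetilde J_\lambda$ is coercive (hence satisfies Palais--Smale), the mountain pass theorem applied between the two local minimizers $0$ and $u_\lambda$ delivers a third critical point $v_\lambda$, distinct from both; $v_\lambda$ is positive by the strong maximum principle and satisfies $v_\lambda<u_\lambda$ in $\Omega$ by the maximality of $u_\lambda$ and strong comparison.

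Finally, for \emph{(ii)}, taking $\lambda_n\downarrow\lambda_*$, the maximal solutions $u_{\lambda_n}$ are uniformly bounded in $\w$ (being dominated pointwise by $w_{\lambda_*+1}$) and converge, up to a subsequence, weakly to some $u_*$ which solves $(P_{\lambda_*})$ by the compactness of the subcritical reaction. \textbf{The main obstacle I expect is ruling out $u_*\equiv 0$:} this requires a quantitative version of the non-existence estimate from (i), yielding a uniform lower bound $[u_{\lambda_n}]_{s,p}\ge c>0$, together with the $L^\infty$ and $C^0_s$-regularity theory for $\fpl$ to promote the weak convergence to a nontrivial pointwise limit at the threshold $\lambda_*$.
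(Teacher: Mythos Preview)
Your overall architecture is sound and in several places you take a genuinely different route from the paper: you prove nonexistence via interpolation and Young rather than the principal eigenvalue; you build the maximal solution by downward iteration from the supersolution $w_\lambda$ solving $\fpl w=\lambda w^{q-1}$, whereas the paper shows $\mathcal S(\lambda)$ is upward directed and compact; and you establish that $0$ is a strict local minimizer directly in $\w$ via the neat identity $(1-\theta)q\cdot r/(r-\theta q)=\p$, while the paper first argues in $\cs$ and then invokes the Sobolev vs.\ H\"older principle. These alternatives are legitimate and, in the case of the $0$-minimizer, arguably cleaner.

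There is, however, a real gap in your mountain-pass step. You assert that ``order-interval arguments and the maximality of $u_\lambda$ make $u_\lambda$ a local minimizer of $\widetilde J_\lambda$ in $\cs$'', but maximality alone does not imply this: your $u_\lambda$ is produced by monotone iteration, with no variational characterization, and nothing prevents it from being a saddle of $\widetilde J_\lambda$. Concretely, the global minimizer of your truncated functional $\widetilde J_\lambda$ could be $0$ (you yourself show $0$ is a strict local minimizer), in which case you have only one local minimum and one critical point $u_\lambda$ of unknown type, and the mountain pass theorem does not apply. The paper avoids this by reversing the order of operations: it first truncates \emph{below} at a strict subsolution $u_*\in\mathcal S(\lambda_*)$ (for $\lambda>\lambda_*$), and takes $u_\lambda$ to be the \emph{global minimizer} of this truncated functional $\hat\Phi_\lambda$. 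Since $u_\lambda-u_*\in{\rm int}(\cs_+)$ by strong comparison, and $\hat\Phi_\lambda$ differs from $\Phi_\lambda$ by a constant on the $\cs$-open set $\{u:u-u_*\in{\rm int}(\cs_+)\}$, one gets for free that $u_\lambda$ is a $\cs$-local minimizer of $\Phi_\lambda$, hence a $\w$-local minimizer by the Sobolev vs.\ H\"older principle. Only \emph{then} does the paper truncate above at $u_\lambda$ and run the mountain pass. You can repair your argument the same way, replacing $u_*$ by any $u_\mu$ with $\mu\in[\lambda_*,\lambda)$; alternatively, you may argue by cases on the global minimizer of a doubly truncated functional on $[u_\mu,u_\lambda]$, noting that if it is not $u_\lambda$ it already furnishes the second solution. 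Either way, the missing ingredient is a truncation from \emph{below} at a strict subsolution, not merely from above at $u_\lambda$.
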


\noindent
In Theorem \ref{pqr} above, which can be seen as a bifurcation result for problem $(P_\lambda)$, a notable feature is represented by the strict inequalities between solutions in \ref{pqr3}, \ref{pqr4}, which are obtained by applying the mountain pass theorem to a suitably truncated energy functional. Also, we emphasize the strong monotonicity of the mapping $\lambda\mapsto u_\lambda$ in \ref{pqr4}. We will actually prove stronger comparisons between solutions, namely $u_\lambda-v_\lambda\in {\rm int}(\cs_+)$ (see \eqref{intcs} below for this space), and similarly $u_\lambda-u_\mu\in {\rm int}(\cs_+)$. Surprisingly enough, the situation for $(P_\lambda)$ reflects that of the super-diffusive logistic equation studied in \cite{IMP} ($p<q<r$), despite in our case the powers on the right-hand side are below the homogeneity of the leading operator, which is $p-1$.
\vskip2pt
\noindent
The inequalities in Theorem \ref{pqr} are obtained by considering several truncations of the reaction, and applying either direct minimization, or min-max schemes to the corresponding energy functionals. This in turn requires some general properties of solutions to fractional elliptic equations. First, we employ general {\em strong minimum/comparison principles} from \cite{IMP}, which in fact ensure not only strict inequalities but even a fractional Hopf-type property at boundary points. For instance, nontrivial non-negative solutions in $\Omega$ of the equation
\[\fpl u = \lambda u^{q-1}-u^{r-1}\]
not only fulfil $u>0$ in $\Omega$, but also
\[\inf_{\Omega}\frac{u}{\ds} > 0,\]
where ${\rm d}_\Omega$ denotes the distance from the boundary of $\Omega$. Denoting $\nu$ the inward normal unit vector of $\partial\Omega$, the inequality above in turn implies that for all $x\in\partial\Omega$
\beq
\label{hopf}
\frac{\partial u}{\partial\nu^s}(x) := \lim_{t\to 0^+}\frac{u(x+t\nu)}{t^s} > 0.
\eeq
It is notable that this a purely nonlocal feature. Indeed, in the case of the local $p$-Laplacian (i.e., for $s=1$ formally) the strong minimum principle requires conditions on the reaction (see \cite{V}), which rule out $(p-1)$-sublinear terms. In the absence of such conditions the problem may admit infinite nontrivial, non-negative solutions with compact support in $\Omega$, but proving the existence of a strictly positive one is troublesome. This is not the case in the fractional setting, as the nonlocality of the operator $\fpl$ allows to prove a strong minimum principle also for $(p-1)$-sublinear reactions, see Proposition \ref{smp}. Similarly, in the strong comparison principle similar conditions on the reactions can be dropped in the nonlocal framework (compare \cite{GV} and Proposition \ref{scp}). 
\vskip2pt
\noindent
Also, the use of truncations requires some care in handling the function spaces. Indeed, the energy functional $\Phi_\lambda$ for problem $(P_\lambda)$ is naturally defined in the fractional Sobolev space $\w$, but when truncating the reaction, we essentially restrict it to certain order-related subsets with empty interior (think for instance of the positive order cone of $\w$), thus losing information about the local minimizers of $\Phi_\lambda$. This problem was brilliantly solved in \cite{BN} (for elliptic equations driven by the Laplacian operator) by {\em changing the topology} from $H^1(\Omega)$ to $C^1(\overline\Omega)$, and proving that local minimizers of the energy functional in both topologies do coincide. This useful result was extended to the $p$-Laplacian in \cite{GAPM} and, later, in \cite{BIU}, keeping the space $C^1(\overline\Omega)$ and exploiting the global $C^{1,\alpha}(\overline\Omega)$-regularity of weak solutions of $p$-Laplacian equations obtained in \cite{L1}.
\vskip2pt
\noindent
Passing to the fractional framework, the situation becomes more involved. Indeed, weak solutions of fractional order Dirichlet problems, despite being fairly regular in the interior of the domain, only attain $C^s(\overline\Omega)$ as an {\em optimal} global regularity, even for $p=2$. The space $C^1(\overline{\Omega})$ must then be replaced and a natural choice is the larger space $C^0_s(\overline{\Omega})$ defined in \eqref{cos} below, which is essentially the set of functions $u$ such that $u/\ds\in C^0(\overline{\Omega})$.  The distance function  is used to replace differentiation at the boundary by its weaker $s$-fractional counterpart defined in \eqref{hopf} and the key point in this framework is that weak solutions $u$ are such that $u/\ds$ admits a H\"older continuous extension to $\overline\Omega$ with natural estimates (see Proposition \ref{reg}). Such regularity result was recently proved for the whole range of exponents $p>1$ in \cite{IM} (extending previous partial results), and grants compactness of solutions in $\cs$, as soon as uniform $L^\infty(\Omega)$ bounds are available.
\vskip2pt
\noindent
The Sobolev vs.\ H\"older minimizers coincidence result is then rephrased for the fractional framework, by comparing the topologies of $\w$ and of  $\cs$. As a matter of fact, under very general assumptions, local minimizers of the energy functional in $\w$ and $\cs$, respectively, do coincide. Such coincidence was proved in \cite{IMS} for the degenerate fractional $p$-Laplacian ($p\ge 2$) by using a variant of the method of \cite{GAPM}. In this paper we come back to this result in Theorem \ref{svh} below, exploiting the recent regularity theorem of \cite{IM}, and extend the coincidence result to all $p>1$. This is done via a simplified approach inspired by \cite{BIU}, which is especially suitable for dealing with the singular case ($1<p<2$). We remark that Theorem \ref{svh} has a general use that goes beyond problem $(P_\lambda)$, for instance it is applied to a family of superlinear equations in \cite{ISV}.
\vskip2pt
\noindent
The structure of the paper is the following: in Section \ref{sec2} we recall some preliminary results on the fractional $p$-Laplacian; in Section \ref{sec3} we prove equivalence between Sobolev and H\"older minima of the energy functional; finally, Section \ref{sec4} is devoted to the study of problem $(P_\lambda)$ and the proof of Theorem \ref{pqr}.
\vskip4pt
\noindent
{\bf Notations:} For any $a\in \R$ and $q>0$ we set $a^q=|a|^{q-2}a$ if $a\neq 0$, $a^q=0$ if $a=0$. Throughout the paper, for any $U\subset\R^N$ we shall set $U^c=\R^N\setminus U$ and denote by $|U|$ the $N$-dimensional Lebesgue measure of $U$. For any two measurable functions $u,v:U\to\R$, $u\le v$ in $U$ will mean that $u(x)\le v(x)$ for a.e.\ $x\in U$ (and similar expressions). The positive (resp., negative) part of $u$ is denoted $u^+$ (resp., $u^-$). If $X$ is an ordered Banach space, then $X_+$ will denote its non-negative order cone and $B_r(x)$, $\overline{B}_r(x)$, $\partial B_r(x)$ will be the open ball, the closed ball, and the sphere, respectively, centered at $x$ with radius $r>0$. For all $r\in[1,\infty]$, $\|\cdot\|_r$ denotes the standard norm of $L^r(\Omega)$ (or $L^r(\R^N)$, which will be clear from the context). Every function $u$ defined in $\Omega$ will be identified with its $0$-extension to $\R^N$. Moreover, $C$ will denote a positive constant (whose value may change case by case).

\section{Preliminaries}\label{sec2}

\noindent
Here we recall some preliminary results on fractional Sobolev spaces, the fractional $p$-Laplacian, and related problems. First, for all open $U\subseteq\R^N$ and all measurable functions $u:U\to\R$, we define the Gagliardo seminorm
\[[u]_{s,p,U} = \Big[\iint_{U\times U}\frac{|u(x)-u(y)|^p}{|x-y|^{N+ps}}\,dx\,dy\Big]^\frac{1}{p}.\]
Accordingly we define the fractional Sobolev space
\[W^{s,p}(U) = \big\{u\in L^p(U):\,[u]_{s,p,U}<\infty\big\}.\]
From now on, let $\Omega\subset\R^N$ be a bounded domain with $C^{1,1}$-smooth boundary, and assume $ps<N$. In order to incorporate the Dirichlet condition, we define the subspace
\[\w = \big\{u\in W^{s,p}(\R^N):\,u=0 \ \text{in $\Omega$}\big\},\]
which is a uniformly convex, separable Banach space with norm $\|u\|_{\w}=[u]_{s,p,\R^N}$, whose dual space is denoted by $W^{-s,p'}(\Omega)$. The embedding $\w\hookrightarrow L^q(\Omega)$ is continuous for all $q\in[1,p^*_s]$ and compact for all $q\in[1,\p)$, where $\p=Np/(N-ps)$. We also introduce the following special space:
\[\widetilde{W}^{s,p}(\Omega) = \Big\{u\in L^p_{\rm loc}(\R^N):\,\exists\,U\Supset\Omega \ \text{s.t. $u\in W^{s,p}(U)$,}\,\int_{\R^N}\frac{|u(x)|^{p-1}}{(1+|x|)^{N+ps}}\, dx<\infty\Big\}.\]
For a comprehensive introduction to the fractional Sobolev spaces, see \cite{L}. We can define the fractional $p$-Laplacian as a nonlinear operator $\fpl:\widetilde{W}^{s,p}(\Omega)\to W^{-s,p'}(\Omega)$ by setting for all $u\in\widetilde{W}^{s,p}(\Omega)$, $\varphi\in\w$
\[\langle\fpl u,\varphi\rangle = \iint_{\R^N\times\R^N} \frac{(u(x)-u(y))^{p-1}(\varphi(x)-\varphi(y))}{|x-y|^{N+ps}}\,dx\,dy\]
(with the notation $a^{p-1}=|a|^{p-2}a$ established above). Such definition is equivalent to the one given in Section \ref{sec1}. Clearly we have $\w\subset\widetilde{W}^{s,p}(\Omega)$, and the restriction of $\fpl$ to $\w$ is a continuous, maximal monotone, $(S)_+$-type operator (see \cite[Lemma 2.1]{FI}). In the next lemma we collect some useful monotonicity properties of $\fpl$:

\begin{proposition}\label{mon}
The operator $\fpl$ has the following properties:
\begin{enumroman}
\item\label{mon1} for all $u\in L^p_{\rm loc}(\R^N)$ s.t.\ $[u]_{s,p,\R^N}<\infty$
\[[u]_{s,p,\R^N}^p \le \langle\fpl u,\pm u^\pm\rangle;\]
\item\label{mon2} for all $u,v\in\widetilde{W}^{s,p}(\Omega)$ s.t.\ $(u-v)^+\in\w$ and
\[\langle\fpl u-\fpl v,(u-v)^+\rangle \le 0,\]
we have $u\le v$ in $\Omega$;
\item\label{mon3} for all $u,v\in\w\cap L^\infty(\Omega)$, $q\ge 1$
\[\langle\fpl u-\fpl v,(u-v)^q\rangle \ge 0.\]
\end{enumroman}
\end{proposition}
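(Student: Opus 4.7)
The plan is to reduce all three statements to pointwise algebraic inequalities for the integrand of the double integral representing $\langle\fpl u,\varphi\rangle$, choosing $\varphi$ suitably in each case, and then integrating against $|x-y|^{-N-ps}$.

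For \ref{mon1} I would first establish the scalar inequality $(a-b)^{p-1}(a^{\pm}-b^{\pm})\ge|a^{\pm}-b^{\pm}|^p$ for all $a,b\in\R$ by a short sign analysis. In the non-trivial case $a\ge 0>b$ one has $a^+-b^+=a$ and $a-b\ge a\ge 0$, so monotonicity of $t\mapsto t^{p-1}$ on $[0,\infty)$ gives $(a-b)^{p-1}\ge a^{p-1}$ and the claim follows on multiplication by $a$; the remaining cases are analogous or trivial. Substituting $a=u(x)$, $b=u(y)$ and integrating against $|x-y|^{-N-ps}$ delivers the estimate.

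For \ref{mon2} I would test the hypothesis with $\varphi=(u-v)^+\in\w$. Setting $a=u(x)$, $b=u(y)$, $c=v(x)$, $d=v(y)$ the integrand becomes
\[
\big[(a-b)^{p-1}-(c-d)^{p-1}\big]\big[(a-c)^+-(b-d)^+\big],
\]
and a case analysis over the four sign-regions of $(a-c,\,b-d)$ shows this quantity is pointwise nonnegative, strictly positive whenever $(u-v)^+(x)\ne(u-v)^+(y)$. Since by assumption the integral is nonpositive, the integrand must vanish a.e., so $(u-v)^+$ is constant on $\R^N$; since $(u-v)^+\in\w$ that constant is zero. For \ref{mon3}, the $L^\infty$-bound guarantees $(u-v)^q\in\w$ (by composition of the Sobolev function $u-v$ with $t\mapsto t^q$, which is locally Lipschitz for $q\ge 2$ and locally H\"older for $1<q<2$), so the pairing is well defined. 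The integrand reads
\[
\big(A^{p-1}-B^{p-1}\big)\big((u-v)(x)^q-(u-v)(y)^q\big),\qquad A=u(x)-u(y),\;B=v(x)-v(y),
\]
and both factors share the sign of $A-B=(u-v)(x)-(u-v)(y)$, the first by strict monotonicity of $t\mapsto t^{p-1}$ on $\R$, the second by that of $t\mapsto t^q$ on $\R$ (valid for $q\ge 1$). Nonnegativity of the integral is immediate.

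The most delicate step is the conclusion of \ref{mon2}: moving from the a.e.\ vanishing of the integrand to $(u-v)^+\equiv 0$. For $p\ge 2$ the classical quantitative inequality $(t^{p-1}-s^{p-1})(t-s)\ge c_p|t-s|^p$ upgrades the case analysis to a lower bound $c_p[(u-v)^+]_{s,p,\R^N}^p$, so vanishing of the integral immediately kills the seminorm; in the singular regime $1<p<2$ the analogous inequality is weighted by $(|t|+|s|)^{p-2}$ and strict positivity must be propagated through each sign-region more carefully before the same conclusion can be drawn.
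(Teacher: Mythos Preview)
Your reduction to pointwise algebraic inequalities is correct and, for part \ref{mon3}, coincides with the paper's own argument (the paper states it as: for $a,b,c,d\in\R$ with $a-b=c-d$ one has $(a^{p-1}-b^{p-1})(c^q-d^q)\ge 0$, taking $a=u(x)-u(y)$, $b=v(x)-v(y)$, $c=u(x)-v(x)$, $d=u(y)-v(y)$). For \ref{mon1} and \ref{mon2} the paper does not argue directly but cites \cite{IL} and \cite{LL} respectively, so your self-contained pointwise arguments are acceptable substitutes in the same spirit.

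Two small corrections. First, in your check that $(u-v)^q\in\w$, recall that with the odd-power convention the map $t\mapsto t^q$ has derivative $q|t|^{q-1}$, which is continuous and locally bounded for every $q\ge 1$; hence $t\mapsto t^q$ is locally Lipschitz in the full range $q\ge 1$, not merely H\"older for $1<q<2$. This matters, since a merely H\"older outer function would not obviously keep the composition in $W^{s,p}$. Second, your closing paragraph is unnecessary: the four-region case analysis for \ref{mon2} already gives strict positivity of the integrand whenever $(u-v)^+(x)\ne(u-v)^+(y)$, uniformly in $p>1$, because $t\mapsto t^{p-1}$ is strictly increasing on all of $\R$ regardless of whether $p\ge 2$ or $1<p<2$. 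Vanishing of the nonnegative integrand a.e.\ on $\R^N\times\R^N$ then forces $(u-v)^+$ to be a.e.\ constant (Fubini), hence identically zero since it vanishes on $\Omega^c$; no quantitative inequality of the form $c_p|t-s|^p$ is required, and the singular case needs no separate treatment.
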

\begin{proof}
For \ref{mon1} see the proof of \cite[Lemma 2.1]{IL} (the original result is stated for $u\in\w$, but the proofs works even in a more general case). For \ref{mon2} see \cite[Lemma 9]{LL}. For \ref{mon3}, it is enough to note that for all $a,b,c,d\in\R$ s.t.\ $a-b=c-d$ we have
\[(a^{p-1}-b^{p-1})(c^q-d^q) \ge 0.\]
Taking $a=u(x)-u(y)$, $b=v(x)-v(y)$, $c=u(x)-v(x)$, and $d=u(y)-v(y)$, we get
\[\iint_{\R^N\times\R^N}\frac{\big((u(x)-u(y)^{p-1}-(v(x)-v(y))^{p-1}\big)\big((u(x)-v(x))^q-(u(y)-v(y))^q\big)}{|x-y|^{N+ps}}\,dx\,dy \ge 0.\]
This concludes the proof.
\end{proof}

\noindent
Now we introduce a general class of Dirichlet problems driven by the fractional $p$-Laplacian:
\beq\label{dir}
\begin{cases}
\fpl u = f(x,u) & \text{in $\Omega$} \\
u = 0 & \text{in $\Omega^c$.}
\end{cases}
\eeq
Here $\Omega$, $p$, $s$ are as above, while the (generally non-autonomous) reaction $f$ is subject to the following structural hypothesis for some $q\in[1,\p]$:
\begin{itemize}[leftmargin=.6cm]
\item[${\bf H}_q$] $f:\Omega\times\R\to\R$ is a Carath\'eodory mapping, and there exists $C_0>0$ s.t.\ for a.e.\ $x\in\Omega$ and all $t\in\R$
\[|f(x,t)| \le C_0(1+|t|^{q-1}).\]
\end{itemize}
Note that ${\bf H}_q$ implies ${\bf H}_{q'}$ for all $1\le q<q'\le\p$, and that ${\bf H}_q$ includes the delicate case of {\em critical growth} when $q=\p$. We define (super, sub-) solutions of our problem as follows:

\begin{definition}\label{sol}
Let $f$ satisfy ${\bf H}_q$ for some $q\in[1,\p]$:
\begin{enumroman}
\item\label{sol1} $u\in\widetilde{W}^{s,p}(\Omega)$ is a (weak) supersolution of \eqref{dir} if $u\ge 0$ in $\Omega^c$ and for all $\varphi\in\w_+$
\[\langle\fpl u,\varphi\rangle \ge \int_\Omega f(x,u)\varphi\,dx;\]
\item\label{sol2} $u\in\widetilde{W}^{s,p}(\Omega)$ is a (weak) subsolution of \eqref{dir} if $u\le 0$ in $\Omega^c$ and for all $\varphi\in\w_+$
\[\langle\fpl u,\varphi\rangle \le \int_\Omega f(x,u)\varphi\,dx;\]
\item\label{sol3} $u\in\w$ is a (weak) solution of \eqref{dir} if it is both a super- and a subsolution, i.e., for all $\varphi\in\w$
\[\langle\fpl u,\varphi\rangle = \int_\Omega f(x,u)\varphi\,dx.\]
\end{enumroman}
\end{definition}

\noindent
Exactly as in the local case ($s=1$), solutions of \eqref{dir} satisfy almost-uniform {\em a priori} bounds if the exponent $q$ is critical, uniform bounds if $q$ is subcritical, and a special universal bound if $q$ is below $p$. Since a complete proof of such bounds seems to lack in the literature, we include it for the reader's convenience:

\begin{proposition}\label{apb}
Let ${\bf H}_q$ hold for some $q\in[1,\p]$, $u\in\w$ be a solution of \eqref{dir}. Then, $u\in L^\infty(\Omega)$. Moreover:
\begin{enumroman}
\item\label{apb1} if $q=\p$, then there exists $\eps_0>0$, depending on $\Omega,p,s,C_0$ s.t.\ for all $K>0$ satisfying
\[\int_{\{|u|\ge K\}}|u|^{\p}\,dx \le \eps_0,\]
we have $\|u\|_\infty\le C$, for a constant $C>0$ depending on $\Omega,p,s,C_0,\|u\|_{\p}$, and $K$;
\item\label{apb2} if $1\le q<\p$, then $\|u\|_\infty\le C$, for a constant $C>0$ depending on $\Omega,p,q,s,C_0$, and $\|u\|_{\p}$;
\item\label{apb3} if $1\le q<p$, then $\|u\|_{\w}\le C$, $\|u\|_\infty\le C$, for a constant $C>0$ depending on $\Omega,p,q,s,C_0$.
\end{enumroman}
\end{proposition}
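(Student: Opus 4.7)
The plan is to argue via a Moser--De~Giorgi iteration adapted to the fractional $p$-Laplacian, together with a reduction step in the range $q<p$. I would first dispose of (iii) by showing it follows from (ii): testing the equation with $u\in\w$ itself yields
\[\|u\|_{\w}^p = \int_\Omega f(x,u)u\,dx \le C_0\int_\Omega(|u|+|u|^q)\,dx \le C\bigl(\|u\|_{\w}+\|u\|_{\w}^q\bigr)\]
by the continuous embeddings $\w\hookrightarrow L^1(\Omega)\cap L^q(\Omega)$ (available since $q<p<\p$). As $q<p$, this forces a universal bound on $\|u\|_{\w}$, hence on $\|u\|_{\p}$ via Sobolev, and part (ii) then furnishes the $L^\infty$ estimate with constant depending only on $\Omega,p,q,s,C_0$.

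For (ii), I would work with $u^+$ (the case of $u^-$ being symmetric). For $M>0$ and $\beta\ge 0$, set $u_M=\min(u^+,M)$ and test the equation with $\varphi=uu_M^{p\beta}\in\w\cap L^\infty(\Omega)$. A pointwise algebraic inequality analogous to the one underlying Proposition \ref{mon}\ref{mon3}, applied with $a=u(x)-u(y)$ and $c=(uu_M^\beta)(x)-(uu_M^\beta)(y)$, yields a nonlocal Caccioppoli-type bound
\[[uu_M^\beta]_{s,p,\R^N}^p \le C(\beta+1)^{p}\int_\Omega|f(x,u)|\,u\,u_M^{p\beta}\,dx.\]
Bounding the left-hand side from below via the Sobolev inequality by $S\|uu_M^\beta\|_{\p}^p$, and the right via ${\bf H}_q$ and H\"older's inequality (crucially using $q<\p$), one obtains a Moser recursion $\|u\|_{\p(\beta+1)}\le A(\beta)^{1/(\beta+1)}\|u\|_{p(\beta+1)}$ with $A(\beta)$ growing polynomially in $\beta$. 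Iterating along $\beta_n+1=(\p/p)^n$, sending $M\to\infty$ and summing the resulting geometric-type series delivers $\|u\|_\infty\le C$ with $C$ depending on $\Omega,p,q,s,C_0,\|u\|_{\p}$.

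For (i) the same scheme applies, but the right-hand side of the Caccioppoli inequality now contains a genuinely critical contribution, namely (up to lower-order terms) $\int_\Omega|u|^{\p-1}u\,u_M^{p\beta}\,dx$. Splitting this on $\{|u|\le K\}\cup\{|u|>K\}$, the first piece is bounded by $K^{\p-p}$ times a subcritical integral, while on the second H\"older's inequality with exponents $(\p/(\p-p),\p/p)$ gives
\[\Bigl(\int_{\{|u|>K\}}|u|^{\p}\,dx\Bigr)^{(\p-p)/\p}\|uu_M^\beta\|_{\p}^p\le\eps_0^{(\p-p)/\p}\|uu_M^\beta\|_{\p}^p,\]
which, for $\eps_0$ small enough depending on the Sobolev constant, $C_0$, $\Omega$, $p$ and $s$, is absorbed into the left-hand side. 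The remaining estimate is subcritical in character, and the iteration from (ii) produces the bound in (i), with the $K$-dependence entering through the first piece of the split.

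The main obstacle is establishing the Caccioppoli inequality with sharp polynomial control in $\beta$: the required algebraic manipulations are routine when $p\ge 2$ but become genuinely delicate in the singular range $1<p<2$, where the usual chain-rule trick for $(-\Delta)_p^s$ breaks down; however, such nonlocal estimates are by now available in the literature and can be invoked off the shelf.
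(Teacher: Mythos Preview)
Your approach via a direct Moser--De Giorgi iteration is sound, and for part \ref{apb3} you match the paper exactly. The route diverges in \ref{apb1} and \ref{apb2}. For \ref{apb1} the paper does not redo the absorption argument but simply invokes \cite[Theorem 3.3, Remark 3.4]{CMS} as a black box. The real difference is in \ref{apb2}: rather than iterating, the paper \emph{reduces \ref{apb2} to \ref{apb1}} by a single test with $(u-K/2)^+$ (and its negative counterpart), which together with Chebyshev's inequality and the elementary bound $\int_{\{|u|\ge K\}}|u|^{\p}\,dx\le 4^{\p}\|(|u|-K/2)^+\|_{\p}^{\p}$ shows that for $q<\p$ the smallness condition $\int_{\{|u|\ge K\}}|u|^{\p}\,dx\le\eps_0$ is automatically met once $K$ is chosen large in terms of $\|u\|_{\p}$; then \ref{apb1} closes the argument. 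Your scheme is more self-contained and makes the mechanism explicit, at the price of the nonlocal Caccioppoli inequality (whose singular-case version you rightly flag as the delicate point); the paper's reduction is much shorter once \ref{apb1} is taken for granted, and sidesteps the $1<p<2$ technicalities entirely.
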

\begin{proof}
 Clearly ${\bf H}_q$ implies ${\bf H}_{\p}$.  Test \eqref{dir} with $u$ and use  ${\bf H}_{\p}$ to get
 \[
 \|u\|_{\w}^p=\big\langle\fpl u,u\big\rangle =\int_\Omega f(x, u) u\, dx\le C\int_\Omega |u|+|u|^{\p}\, dx,
 \]
 so that by the embedding $\w\hookrightarrow L^{\p}(\Omega)$  and H\"older's inequality, it holds
 \[
 \frac{1}{C} \|u\|_{\p}^{p/\p}\le \|u\|_{\w}^p\le C\Big( \|u\|_{\p}+\|u\|_{\p}^{\p}\Big).
 \]
 Hence, a bound on $\|u\|_{\p}$ is equivalent to one on $\|u\|_{\w}$, and statement \ref{apb1} follows combining \cite[Theorem 3.3, Remark 3.4]{CMS}.
\vskip2pt
\noindent
To prove \ref{apb2}, let $\eps_0>0$ be as in \ref{apb1}. Also fix $K>2$, to be determined later. By Chebyshev's inequality  we have for any $u\in  L^{\p}(\Omega)$, $t> 0$
\beq\label{apb4}
\big|\big\{|u|\ge t\big\}\big| \le \frac{\|u\|_{\p}^{\p}}{t^{\p}}.
\eeq
Also, we have
\beq\label{apb5}
\int_{\{|u|\ge K\}}|u|^{\p}\,dx \le 4^{\p}\Big\|\Big(|u|-\frac{K}{2}\Big)^+\Big\|_{\p}^{\p}.
\eeq
Indeed, we may split the integral on the left hand side as follows:
\[\int_{\{|u|\ge K\}}|u|^{\p}\,dx = \sum_{i=0}^\infty\int_{\{2^iK\le |u|<2^{i+1}K\}}|u|^{\p}\,dx.\]
Further, we note that whenever $2^iK\le |u|<2^{i+1}K$ ($i\ge 0$) we have
\[|u|-\frac{K}{2} \ge \Big(2^i-\frac{1}{2}\Big)K \ge \frac{2^{i+1}}{4}K \ge \frac{|u|}{4},\]
hence
\begin{align*}
\int_{\{|u|\ge K\}}|u|^{\p}\,dx &\le 4^{\p}\sum_{i=0}^\infty\int_{\{2^iK\le |u|<2^{i+1}K\}}\Big[\Big(|u|-\frac{K}{2}\Big)^+\Big]^{\p}\,dx \\
&\le 4^{\p}\Big\|\Big(|u|-\frac{K}{2}\Big)^+\Big\|_{\p}^{\p}.
\end{align*}
Now let $u\in\w$ be a solution of \eqref{dir}. We begin from Proposition \ref{mon} \ref{mon1}, then we test \eqref{dir} with $(u-K/2)^+\in\w$, we apply ${\bf H}_q$ recalling that $K>2$:
\beq
\label{jd}
\begin{split}
\Big\|\Big(u-\frac{K}{2}\Big)^+\Big\|_{\w}^p &\le \Big\langle\fpl u,\Big(u-\frac{K}{2}\Big)^+\Big\rangle \\
&= \int_\Omega f(x,u)\Big(u-\frac{K}{2}\Big)^+\,dx \\
&\le C\int_{\{u>K/2\}}u^{q-1}\Big(u-\frac{K}{2}\Big)\,dx.
\end{split}
\eeq
On $\{u\ge K/2\}$ it holds
\[
u\le 2\Big(u-\frac{K}{4}\Big) \quad\text{and}\quad  u-\frac{K}{2}\le u-\frac{K}{4}
\]
so 
\[
u^{q-1}\Big(u-\frac{K}{2}\Big)\le 2^{q-1}\Big(u-\frac{K}{4}\Big)^q.
\]
Using the latter in \eqref{jd}, we can proceed through H\"older's inequality, to get
\begin{align*}
\Big\|\Big(u-\frac{K}{2}\Big)^+\Big\|_{\w}^p &\le C\int_{\{u>K/2\}}\Big(u-\frac{K}{4}\Big)^q\,dx\\
&\le C\Big\|\Big(u-\frac{K}{4}\Big)^+\Big\|_{\p}^q\Big|\Big\{u\ge\frac{K}{4}\Big\}\Big|^{1-\frac{q}{\p}}\\
&\le C\|u\|_{\p}^q\Big|\Big\{u\ge\frac{K}{4}\Big\}\Big|^{1-\frac{q}{\p}}.
\end{align*}
Similarly, testing with $-(u+K/2)^-\in\w$, we get
\[\Big\|\Big(u+\frac{K}{2}\Big)^-\Big\|_{\w}^p \le C\|u\|_{\p}^q\Big|\Big\{u\le-\frac{K}{4}\Big\}\Big|^{1-\frac{q}{\p}}.\]
Summing up these two estimates and using   the embedding $\w\hookrightarrow L^{\p}(\Omega)$, we infer
\[
\Big\|\Big(|u|-\frac{K}{2}\Big)^+\Big\|_{\p}^p \le C\|u\|_{\p}^q\Big|\Big\{|u|\ge\frac{K}{4}\Big\}\Big|^{1-\frac{q}{\p}} 
\]
and applying \eqref{apb4} we conclude
\[
\Big\|\Big(|u|-\frac{K}{2}\Big)^+\Big\|_{\p}^p \le \frac{C}{K^{\p-q}}\|u\|_{\p}^{\p}.
\]
Finally, we use \eqref{apb5}:
\begin{align*}
\int_{\{|u|\ge K\}}|u|^{\p}\,dx &\le C\Big\|\Big(|u|-\frac{K}{2}\Big)^+\Big\|_{\p}^{\p} \\
&\le \frac{C}{K^{(\p-q)\frac{\p}{p}}}\|u\|_{\p}^{(\p)^2/p},
\end{align*}
and the latter tends to $0$ as $K\to\infty$, since $q<\p$. So, we can choose $K>0$ depending on the data and $\|u\|_{\p}$, s.t.\
\[\int_{\{|u|\ge K\}}|u|^{\p}\,dx \le \eps_0.\]
By \ref{apb1}, then we deduce $\|u\|_\infty\le C$, with $C>0$ depending on $\Omega,p,q,s,C_0,$ and $\|u\|_{\p}$.
\vskip2pt
\noindent
Finally, we prove \ref{apb3}. If ${\bf H}_q$ holds with $q<p$, then for any solution $u\in\w$ of \eqref{dir} we have
\begin{align*}
\|u\|_{\w}^p &= \langle\fpl u,u\rangle \\
&= \int_\Omega f(x,u)u\,dx \\
&\le C_0\int_\Omega(|u|+|u|^q)\,dx \\
&\le C\|u\|_1+C\|u\|_q^q.
\end{align*}
By the continuous embeddings $\w\hookrightarrow L^1(\Omega),L^q(\Omega)$, and $p>q$, we see that $\|u\|_{\w}\le C$ for some $C>0$ depending on $\Omega,p,q,s,C_0$. In turn, by \ref{apb2}, this implies $\|u\|_\infty\le C$ for a different $C>0$ depending on the same quantities.
\end{proof}

\noindent
Classical nonlinear regularity theory does not apply to fractional equations. Combining Proposition \ref{apb} above with \cite[Theorem 2.7]{IM}, one can see that any solution $u\in\w$ of \eqref{dir} enjoys the optimal global regularity $u\in C^s(\R^N)$. More important, to our purposes, is a form of fine (or weighted) boundary regularity, already mentioned in Section \ref{sec1}. We now discuss such issue in more precise terms. For future use, we prefer to deal with a simplified problem:
\beq\label{dirg}
\begin{cases}
\fpl u = g(x) & \text{in $\Omega$} \\
u = 0 & \text{in $\Omega^c$,}
\end{cases}
\eeq
whose solutions are meant as in Definition \ref{sol}. Set for all $x\in\R^N$
\[{\rm d}(x) = {\rm dist}(x,\Omega^c).\]
The following fine boundary regularity result holds (see \cite[Theorem 1.1]{IM}):

\begin{proposition}\label{reg}
Let $g\in L^\infty(\Omega)$, $u\in\w$ be a solution of \eqref{dirg}. Then, $u/\ds$ admits a $\alpha$-H\"older continuous extension to $\overline\Omega$, satisfying
\[\Big\|\frac{u}{\ds}\Big\|_{C^\alpha(\overline\Omega)} \le C\|g\|_\infty^\frac{1}{p-1},\]
with $\alpha\in(0,s)$, $C>0$ depending on $N,p,s,$ and $\Omega$.
\end{proposition}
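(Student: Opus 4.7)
The plan is to combine three ingredients: an \emph{a priori} $L^\infty$-bound for $u$, two-sided barrier estimates pinning $|u|$ to the distance function, and an iterative oscillation-decay argument for $u/\ds$ centered at boundary points.

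By the $(p-1)$-homogeneity of $\fpl$, I would first normalize, setting $v = u/\|g\|_\infty^{1/(p-1)}$ so that $\fpl v = \tilde g$ with $\|\tilde g\|_\infty \le 1$; proving the claim for $v$ with a constant depending only on geometric data yields the stated $\|g\|_\infty^{1/(p-1)}$-dependence after undoing the scaling. Since $g \in L^\infty(\Omega)$ trivially satisfies ${\bf H}_1$, Proposition \ref{apb}\ref{apb2} gives $\|v\|_\infty \le C$.

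Next, I would exploit the $C^{1,1}$-smoothness of $\partial\Omega$ to find, at each boundary point $x_0$, uniform interior and exterior tangent balls $B_R(y_0^\pm)$. I would construct the explicit barrier $w(x) = (R^2 - |x - y_0^+|^2)_+^s$, verify by direct computation that $|\fpl w|$ is bounded in a neighborhood of $\partial\Omega \cap \overline{B_R(y_0^+)}$, and apply Proposition \ref{mon}\ref{mon2} to $\pm v$ versus a suitable multiple of $w$ (or translates thereof anchored at varying boundary points) to deduce
\[
|v(x)| \le C\, \ds(x) \qquad \text{for all } x \in \Omega,
\]
so that $v/\ds \in L^\infty(\overline\Omega)$ with norm controlled by the data alone.

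The crux is to upgrade this boundedness to H\"older continuity via oscillation decay at $\partial\Omega$. Fix $x_0 \in \partial\Omega$ and set $v_r(x) = v(x_0 + rx)/r^s$; thanks to $\partial\Omega \in C^{1,1}$, the rescaled domain converges after rotation to a half-space as $r \to 0^+$, while $v_r$ solves a fractional $p$-Laplace equation with right-hand side of order $r^s$. Comparing $v_r$ with translates of the barriers $c_\pm$ times the rescaled distance function and invoking a weak Harnack inequality for $\fpl$ on annular regions at the boundary, one obtains
\[
\mathrm{osc}_{B_{r/2}(x_0)\cap\Omega}\frac{v}{\ds} \le \theta\,\mathrm{osc}_{B_r(x_0)\cap\Omega}\frac{v}{\ds} + C\, r^\beta
\]
for some $\theta \in (0,1)$ and $\beta > 0$; standard iteration converts this into H\"older continuity of $v/\ds$ at $\partial\Omega$ with any exponent $\alpha \in (0,s)$ sufficiently small. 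The interior H\"older regularity of $v$ (where $\ds$ is smooth and bounded away from zero) is then combined with this boundary estimate via a patching argument to yield the global $C^\alpha(\overline\Omega)$-norm.

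The main obstacle lies in making the oscillation-decay step uniform throughout the range $p>1$. In the singular case $1<p<2$ there is no $C^{1,\alpha}$-interior theory to lean on, and one must replace it with a scale-invariant barrier construction together with a nonlinear weak Harnack inequality that does not degenerate as the right-hand side becomes small. This is precisely the technical input supplied by \cite{IM}; once it is available, the passage to Proposition \ref{reg} follows a classical Campanato-type scheme, with the factor $\|g\|_\infty^{1/(p-1)}$ reappearing from undoing the initial scaling.
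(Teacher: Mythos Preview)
The paper does not prove this proposition; it is quoted verbatim as \cite[Theorem 1.1]{IM} and used as a black box throughout. Your proposal is therefore not an alternative to anything in the paper, but a sketch of the content of the cited reference itself.

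As an outline of the strategy behind \cite{IM} your plan is broadly correct: normalize via the $(p-1)$-homogeneity, obtain an $L^\infty$-bound, compare with barriers modeled on $\ds$ to get $|v|\le C\,\ds$, and run an oscillation-decay iteration for $v/\ds$ at boundary points. One small correction: for the $L^\infty$-bound you should invoke Proposition~\ref{apb}\ref{apb3} rather than \ref{apb2}, since a right-hand side independent of $u$ satisfies ${\bf H}_1$ with $q=1<p$, and only \ref{apb3} gives a bound depending purely on the data (the constant in \ref{apb2} still involves $\|u\|_{\p}$).

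The more substantive issue is in your final paragraph, where you describe \cite{IM} as supplying a ``technical input'' (a non-degenerating weak Harnack inequality) after which Proposition~\ref{reg} would follow by a routine Campanato scheme. This inverts the situation: Proposition~\ref{reg} \emph{is} the main theorem of \cite{IM}, and the oscillation-decay step you sketch is precisely its core. In the singular range $1<p<2$ that step requires considerably more than an off-the-shelf Harnack inequality; it involves delicate barrier constructions adapted to flattening half-spaces with quantitative control of all error terms, and occupies most of \cite{IM}. So your proposal is a faithful high-level summary of the right approach, but it is a plan rather than a proof, and the present paper deliberately avoids reproducing one.
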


\noindent
We rephrase the discussion above in terms of function spaces. Set for all $\alpha\in[0,1)$
\beq
\label{cos}
C^\alpha_s(\overline\Omega) = \Big\{u\in C(\overline\Omega):\,\frac{u}{\ds} \ \text{has an extension in $C^\alpha(\overline\Omega)$}\Big\},
\eeq
a Banach space endowed with the norm
\[\|u\|_{C^\alpha_s(\overline\Omega)} = \Big\|\frac{u}{\ds}\Big\|_{C^\alpha(\overline\Omega)}.\]
First, note that the positive order cone $\cs_+$ has a nonempty interior given by
\beq
\label{intcs}
{\rm int}(\cs_+) = \Big\{u\in\cs:\,\inf_{x\in\Omega}\frac{u(x)}{\ds(x)}>0\Big\}.
\eeq
By the Ascoli-Arzel\`a theorem, the embedding $C^\alpha_s(\overline\Omega)\hookrightarrow C^\beta_s(\overline\Omega)$ is compact for all $\alpha>\beta$. Proposition \ref{reg} ensures that any solution of \eqref{dirg} lies in $C^\alpha_s(\overline\Omega)$, with a control on the norm depending on the $L^\infty(\Omega)$-norm of the reaction $g$.
\vskip2pt
\noindent
Now let $u\in\w$ be a solution of \eqref{dir}. By applying Proposition \ref{apb} with a convenient $K>0$ we find $u\in L^\infty(\Omega)$. This in turn, through ${\bf H}_q$, implies $f(\cdot,u)\in L^\infty(\Omega)$. Proposition \ref{reg} (with $g=f(\cdot,u)$) then ensures that $u\in C^\alpha_s(\overline\Omega)$. Note that a uniform bound of the $C^\alpha_s(\overline\Omega)$-norm of $u$ is only available if $q<\p$.
\vskip2pt
\noindent
We conclude this section by recalling the strong minimum and comparison principles for the fractional $p$-Laplacian. From \cite[Theorems 2.6, 2.7]{IMP} we have:

\begin{proposition}\label{smp}
Let $h\in C^0(\R)\cap BV_{\rm loc}(\R)$, $u\in\widetilde{W}^{s,p}(\Omega)\cap C^0(\overline\Omega)$, $u\neq 0$, s.t.\
\[\begin{cases}
\fpl u+h(u) \ge h(0) & \text{in $\Omega$} \\
u \ge 0 & \text{in $\R^N$.}
\end{cases}\]
Then,
\[\inf_\Omega\,\frac{u}{\ds} > 0.\]
In particular, if $u\in\cs$, then $u\in{\rm int}(\cs_+)$.
\end{proposition}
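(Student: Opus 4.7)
The plan is to split the conclusion into two separate assertions: (a) strict interior positivity $u>0$ in $\Omega$, and (b) the quantitative Hopf-type boundary estimate $\inf_\Omega u/\ds>0$. The main ingredient behind both is the nonlocality of $\fpl$, which sees the whole function $u$ through the tail and thereby allows one to dispense with the V\'azquez-type sign hypothesis needed in the local case.

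For part (a) I would argue by contradiction, assuming $u(x_0)=0$ for some $x_0\in\Omega$. Since $u\in C^0(\overline\Omega)$ is bounded and $h\in BV_{\rm loc}$, the Jordan decomposition yields $|h(u)-h(0)|\le L\,u$ on the range of $u$, so the differential inequality reduces to $\fpl u\ge -L\,u$ weakly in $\Omega$. At the same time, the heuristic pointwise identity
\[
\fpl u(x_0)=-2\int_{\R^N}\frac{u(y)^{p-1}}{|x_0-y|^{N+ps}}\,dy<0,
\]
strictly negative because $u$ is continuous, nonnegative and nontrivial, can be realized weakly by testing against a nonnegative bump concentrated at $x_0$: the tail of $u$ produces a strictly negative quantity of order one at the scale of the bump, whereas the right-hand side $-L\,u$ vanishes as the bump shrinks, forcing a contradiction. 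This is essentially \cite[Theorem 2.6]{IMP}.

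For part (b), continuity and part (a) give $\eta,m>0$ with $u\ge m$ on $\{\ds\ge\eta\}$. Using the $C^{1,1}$-smoothness of $\partial\Omega$, a barrier computation as in \cite{IM} shows $\fpl(\delta\,\ds)\le C\delta^{p-1}$ on the strip $\{0<\ds<\eta\}$, so for $\delta>0$ small enough the function $v_\delta:=\delta\,\ds\wedge m$ is a weak subsolution of $\fpl v+h(v)\le h(0)$ in $\Omega$. A strong comparison principle in the spirit of \cite[Theorem 2.7]{IMP}---applied through Proposition \ref{mon}\ref{mon2} after reabsorbing the Lipschitz part of $h$ on $[0,\|u\|_\infty]$ on both sides to restore monotonicity---then gives $v_\delta\le u$ in $\Omega$, hence $u/\ds\ge\delta$ throughout $\Omega$, and \eqref{intcs} yields $u\in{\rm int}(\cs_+)$ whenever $u\in\cs$. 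The main obstacle is part (a): turning the formal negativity of $\fpl u$ at an interior zero into a rigorous weak contradiction under the minimal $C^0\cap BV_{\rm loc}$ regularity of $h$, which is exactly why V\'azquez-type structural conditions can here be dispensed with.
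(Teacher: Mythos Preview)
The paper does not prove this proposition at all; it simply records it as \cite[Theorems~2.6,~2.7]{IMP}. So there is no ``paper's own proof'' to compare against, only the cited source. Your two-step strategy (interior positivity via the nonlocal tail, then a Hopf barrier near $\partial\Omega$) is indeed the architecture of that argument.

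There is, however, a genuine error in your execution. You claim that the Jordan decomposition of $h\in BV_{\rm loc}(\R)$ yields a Lipschitz bound $|h(u)-h(0)|\le L\,u$ on the range of $u$. This is false: $BV_{\rm loc}$ gives $h=h_1-h_2$ with $h_1,h_2$ nondecreasing, hence only $|h(t)-h(0)|\le V_h([0,t])$, which need not be $O(t)$ as $t\to 0^+$. The counterexample is exactly the function the paper cares about: $h(t)=(t^+)^{r-1}$ with $1<r<2$ is continuous and monotone (hence $BV_{\rm loc}$), but $|h(t)-h(0)|=t^{r-1}\gg t$ near $0$. Since the whole point of the proposition---as stressed in the introduction---is to cover such $(p-1)$-sublinear reactions where V\'azquez-type conditions fail, reducing to $\fpl u\ge -L\,u$ defeats the purpose.

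In part~(a) this misstep is inessential: once $u(x_0)=0$, continuity of $h$ alone gives $h(u)\to h(0)$ near $x_0$, so the right-hand side still vanishes against a shrinking bump without any Lipschitz control. In part~(b) the gap is real: your comparison between the barrier $v_\delta$ and $u$ via Proposition~\ref{mon}\ref{mon2} hinges on ``reabsorbing the Lipschitz part of $h$'', which is unavailable. The correct device is to use the Jordan decomposition structurally: since $h_2$ is nondecreasing and $u\ge 0$, one has $h_2(u)\ge h_2(0)$, so the hypothesis $\fpl u+h_1(u)-h_2(u)\ge h_1(0)-h_2(0)$ implies $\fpl u+h_1(u)\ge h_1(0)$, reducing to the case of \emph{nondecreasing} $h$. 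For nondecreasing $h$ the map $u\mapsto\fpl u+h(u)$ is monotone, and the weak comparison with the barrier goes through directly via Proposition~\ref{mon}\ref{mon2} without any Lipschitz surgery.
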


\begin{proposition}\label{scp}
Let $h\in C^0(\R)\cap BV_{\rm loc}(\R)$, $u\in\widetilde{W}^{s,p}(\Omega)\cap C^0(\overline\Omega)$, $v\in\w\cap C^0(\overline\Omega)$, $u\neq v$, $K>0$ s.t.\
\[\begin{cases}
\fpl v+h(v) \le \fpl u+h(u) \le K & \text{in $\Omega$} \\
0 < v \le u & \text{in $\Omega$} \\
u \ge 0 & \text{in $\Omega^c$.}
\end{cases}\]
Then, $u>v$ in $\Omega$. In particular, if $u\in{\rm int}(\cs_+)$, then $u-v\in{\rm int}(\cs_+)$.
\end{proposition}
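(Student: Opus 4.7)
The plan is to reduce the strong comparison principle to a nonlocal minimum principle in the spirit of Proposition \ref{smp}, applied to the difference $w := u - v$. First verify that $w$ satisfies the structural requirements: $w \in C^0(\overline\Omega)$, $w \ge 0$ in $\R^N$ (using $v \equiv 0$ on $\Omega^c$ since $v \in \w$, together with $u \ge 0$ on $\Omega^c$ by hypothesis), $w \not\equiv 0$ since $u \ne v$, and $w \in \widetilde{W}^{s,p}(\Omega)$ as an algebraic sum.

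The key step is to linearize the nonlinear difference $\fpl u - \fpl v$ via a discrete Picone-type identity: writing $\psi(t) = |t|^{p-2}t$, the fundamental theorem of calculus gives
\[
\psi(u(x)-u(y)) - \psi(v(x)-v(y)) = a(x,y)\,(w(x) - w(y)),
\]
with $a(x,y) = (p-1)\int_0^1 |t(u(x)-u(y)) + (1-t)(v(x)-v(y))|^{p-2}\,dt \ge 0$. Substituting this into the weak form of the inequality $\fpl u + h(u) \ge \fpl v + h(v)$, and using the $BV_{\rm loc}$ decomposition $h = h_1 - h_2$ with $h_1, h_2$ nondecreasing, one finds that $w$ satisfies weakly in $\Omega$ a differential inequality of the form $L_{u,v} w + \tilde h(w) \ge \tilde h(0)$, where $L_{u,v}$ denotes the linear nonlocal operator with kernel $a(x,y)/|x-y|^{N+ps}$ and $\tilde h \in C^0(\R) \cap BV_{\rm loc}(\R)$ is constructed from the monotone pieces of $h$ together with the hypothesis $u \ge v$.

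The final step is to invoke the nonlocal minimum principle for $L_{u,v}$, proved by repeating the argument of Proposition \ref{smp} with $\fpl$ replaced by the linearized operator. The algebraic and barrier estimates used there adapt to this setting because the kernel $a(x,y)/|x-y|^{N+ps}$ retains the symmetry of the fractional $p$-Laplacian kernel and admits uniform pointwise bounds on compact subsets of $\Omega\times\Omega$. This yields $\inf_\Omega w/\ds > 0$, which simultaneously gives $u > v$ in $\Omega$ and, under the assumption $u \in {\rm int}(\cs_+)$, the conclusion $u - v \in {\rm int}(\cs_+)$ via \eqref{intcs}.

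The main obstacle lies in the analysis of the weight $a(x,y)$, which may vanish in the degenerate regime $p > 2$ when $u(x)-u(y)$ and $v(x)-v(y)$ are both small, and may blow up in the singular regime $1 < p < 2$. Uniform handling of both regimes requires careful quantitative bounds on $a$ in terms of $|x-y|$ and the global $C^s(\R^N)$ estimates on $u, v$ granted by Proposition \ref{reg}; these are exactly the same difficulties that arise in the fine boundary regularity theory for the fractional $p$-Laplacian, and justify why the statement is inherited in \cite{IMP} from the theory developed there.
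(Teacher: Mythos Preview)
The paper does not prove Proposition~\ref{scp} at all: it is quoted verbatim from \cite[Theorem~2.7]{IMP}, so there is no in-paper argument to compare against. Your proposal is therefore not a reconstruction but an independent attempt, and it should be judged on its own merits.

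Your linearization strategy is not the route taken in \cite{IMP}. There the interior strict inequality $u>v$ is obtained by a direct \emph{contact point} argument exploiting nonlocality: at any putative touching point $x_0$ one has $u(x_0)-u(y)\le v(x_0)-v(y)$ for all $y$, with strict inequality on a set of positive measure (since $u\neq v$), and monotonicity of $t\mapsto |t|^{p-2}t$ then forces $\fpl u(x_0)<\fpl v(x_0)$, contradicting the differential inequality (after handling $h$ via its $BV$ decomposition). The Hopf-type boundary estimate $\inf_\Omega (u-v)/\ds>0$ is obtained separately by a barrier comparison, and it is \emph{here} that the upper bound $\fpl u+h(u)\le K$ is essential; you never use this hypothesis, which is a red flag.

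As for your approach: the interior positivity part is more salvageable than you suggest, because at a contact point $x_0$ the weight $a(x_0,y)$ is automatically strictly positive wherever $w(y)>0$ (if $a(x_0,y)=0$ then $u(x_0)=u(y)$ and $v(x_0)=v(y)$, forcing $w(y)=w(x_0)=0$). The genuine gap is the boundary estimate. Proposition~\ref{smp} is stated for $\fpl$, not for your linearized operator $L_{u,v}$, and its proof (also in \cite{IMP}) relies on explicit barrier computations for $\fpl$ that do not transfer to a general kernel $a(x,y)/|x-y|^{N+ps}$ without quantitative two-sided bounds on $a$. You acknowledge the degeneracy/singularity of $a$ but then defer its resolution back to \cite{IMP}, which is circular. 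In short: the interior part of your sketch can be made to work, but the Hopf conclusion $u-v\in{\rm int}(\cs_+)$ is not established by what you wrote, and the unused hypothesis $K$ is exactly the missing ingredient.
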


\section{Sobolev vs.\ H\"older minima}\label{sec3}

\noindent
Solutions of problem \eqref{dir} can be interpreted as critical points of an energy functional. In this section we assume that $f$ satisfies ${\bf H}_{\p}$. Set for all $(x,t)\in\Omega\times\R$
\[F(x,t) = \int_0^t f(x,\tau)\,d\tau.\]
Due to ${\bf H}_{\p}$, we may set for all $u\in\w$
\[\Phi(u) = \frac{\|u\|_{\w}^p}{p}-\int_\Omega F(x,u)\,dx,\]
so that $\Phi\in C^1(\w)$ with Fr\'echet derivative given for all $u,\varphi\in\w$ by
\[\langle\Phi'(u),\varphi\rangle = \langle\fpl u,\varphi\rangle-\int_\Omega f(x,u)\varphi\,dx.\]
Thus, Definition \ref{sol} \ref{sol3} is clearly equivalent to $\Phi'(u)=0$ in $W^{-s,p'}(\Omega)$. In the study of critical points of $\Phi$, we often begin with a local minimizer. In this connection, it is sometimes useful to switch from the topology of $\w$ to that of $\cs$, in order to deal with truncations.
\vskip2pt
\noindent
We can now prove the equivalence of Sobolev and H\"older local minima of the functional $\Phi$:

\begin{theorem}\label{svh}
Let $f$ satisfy ${\bf H}_{\p}$, $u_0\in\w$. Then, the following conditions are equivalent:
\begin{enumroman}
\item\label{svh1} there exists $\rho>0$ s.t.\ $\Phi(u_0+v)\ge\Phi(u_0)$ for all $v\in\w$, $\|v\|_{\w}\le\rho$;
\item\label{svh2} there exists $\sigma>0$ s.t.\ $\Phi(u_0+v)\ge\Phi(u_0)$ for all $v\in\w\cap\cs$, $\|v\|_{\cs}\le\sigma$.
\end{enumroman}
\end{theorem}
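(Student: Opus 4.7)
The implication \ref{svh1}$\Rightarrow$\ref{svh2} is straightforward: for any $v\in\w\cap\cs$ the pointwise bound $|v(x)|\le\|v\|_\cs\,\ds(x)^s$ combined with the standard splitting of the Gagliardo double integral over $\Omega\times\Omega$, $\Omega\times\Omega^c$ and $\Omega^c\times\Omega$ produces a constant $C>0$ such that $\|v\|_\w\le C\|v\|_\cs$. Hence the $\cs$-ball of radius $\sigma:=\rho/C$ around $u_0$ is contained in the $\w$-ball of radius $\rho$, and \ref{svh2} follows from \ref{svh1}.

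For the converse \ref{svh2}$\Rightarrow$\ref{svh1} I would argue by contradiction: assume \ref{svh2} with parameter $\sigma>0$ and the existence of $w_n\in\w$ with $r_n:=\|w_n-u_0\|_\w\to 0$ and $\Phi(w_n)<\Phi(u_0)$. For each $n$, I would minimize $\Phi$ over the closed ball $\overline B_n:=\{u\in\w:\|u-u_0\|_\w\le r_n\}$. Existence of a minimum $u_n\in\overline B_n$ follows from coercivity on this bounded set together with weak lower semi-continuity of $\Phi$, standard in the subcritical case $q<\p$ via the compact embedding $\w\hookrightarrow L^q(\Omega)$ and in the critical case $q=\p$ through the equi-absolute continuity of $\{|u|^{\p}\}$ along minimizing sequences. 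By construction $\Phi(u_n)\le\Phi(w_n)<\Phi(u_0)$, so $u_n\neq u_0$ and $u_n\to u_0$ in $\w$.

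The Lagrange multiplier rule gives that either $u_n$ is interior to $\overline B_n$ and solves \eqref{dir} weakly, or $u_n$ lies on $\partial\overline B_n$ and satisfies, for some $\mu_n\ge 0$,
\[
\langle\fpl u_n+\mu_n\fpl(u_n-u_0),\varphi\rangle=\int_\Omega f(x,u_n)\,\varphi\,dx\qquad\forall\,\varphi\in\w.
\]
The crucial step is a uniform $L^\infty$-bound for $\{u_n\}$. In the interior case this is Proposition \ref{apb}, using $u_n\to u_0$ in $L^{\p}(\Omega)$ to verify the equi-absolute continuity hypothesis of item \ref{apb1} at critical growth. In the boundary case I would rely on the fact that the perturbed operator $\fpl(\cdot)+\mu_n\fpl(\cdot-u_0)$ is still monotone and coercive, and run a Moser-type iteration testing with $(u_n-K)^\pm$ for $K>\|u_0\|_\infty$ along the lines of Proposition \ref{apb}; the multiplier $\mu_n$ itself is then bounded by testing the equation against $u_n-u_0$, using the monotonicity of $\fpl$ and the fact that $u_0$, being a $\cs$-local minimum, is itself a weak solution of \eqref{dir}.

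Once $\{u_n\}$ is uniformly bounded in $L^\infty(\Omega)$ and $\{\mu_n\}$ is bounded, Proposition \ref{reg} delivers a uniform $C^\alpha_s(\overline\Omega)$-bound on $\{u_n\}$ (applied to the equivalent Dirichlet problem in which $\mu_n\fpl u_0$ is absorbed into the right-hand side). The compact embedding $C^\alpha_s\hookrightarrow\cs$ together with $u_n\to u_0$ in $\w$ then yields $u_n\to u_0$ in $\cs$ along a subsequence; for $n$ large, $\|u_n-u_0\|_\cs\le\sigma$, so by \ref{svh2} we would have $\Phi(u_n)\ge\Phi(u_0)$, contradicting the construction. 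The main obstacle I anticipate is precisely the boundary case with a nontrivial Lagrange multiplier: the perturbed equation is not of the form \eqref{dirg} and the nonlinearity of $\fpl$ prevents a clean linear absorption of $\mu_n\fpl u_0$ into the right-hand side. The BIU-inspired simplification announced in the introduction, especially designed for the singular regime $1<p<2$, presumably sidesteps this obstruction either by engineering $r_n$ so that $u_n$ automatically lies in the interior of $\overline B_n$, or by a direct weighted Hölder estimate for the perturbed operator.
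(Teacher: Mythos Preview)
Your proposal has genuine gaps in both directions.

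For \ref{svh1}$\Rightarrow$\ref{svh2}: the claimed inequality $\|v\|_{\w}\le C\|v\|_{\cs}$ is \emph{false}. The $\cs$-norm only controls the size of $v$ relative to $\ds$, not its oscillation; your splitting handles the $\Omega\times\Omega^c$ pieces, but the $\Omega\times\Omega$ part of the Gagliardo seminorm is not dominated by $\|v\|_{\cs}$ (take $v=\eps\sin(k\,x_1)\phi$ with $\phi$ smooth and compactly supported in $\Omega$: the $\cs$-norm is $O(\eps)$ uniformly in $k$, while $[v]_{s,p,\R^N}\to\infty$ as $k\to\infty$). The paper instead argues by contradiction: if $u_n\to u_0$ in $\cs$ with $\Phi(u_n)<\Phi(u_0)$, then $u_n\to u_0$ in $L^\infty(\Omega)$, so $\int_\Omega F(x,u_n)\to\int_\Omega F(x,u_0)$; the strict energy inequality then forces $\limsup_n\|u_n\|_{\w}\le\|u_0\|_{\w}$, and combined with weak convergence and uniform convexity of $\w$ this yields $u_n\to u_0$ in $\w$, contradicting \ref{svh1}.

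For \ref{svh2}$\Rightarrow$\ref{svh1}: you correctly identify the obstruction created by minimizing over $\w$-balls, namely that the Lagrange multiplier term $\mu_n\fpl(u_n-u_0)$ is nonlocal, nonlinear, and not a priori in $L^\infty(\Omega)$, so Proposition~\ref{reg} cannot be invoked. The paper avoids this by minimizing over $L^{\p}$-balls $B_n=\{u:\|u-u_0\|_{\p}\le\delta_n\}$: the multiplier term becomes the semilinear expression $\lambda_n(u_n-u_0)^{\p-1}$, and the key estimate $\lambda_n\|u_n-u_0\|_\infty^{\p-1}\le C$ (obtained by testing with $(u_n-u_0)^q$, using Proposition~\ref{mon}~\ref{mon3}, and letting $q\to\infty$) makes the full right-hand side uniformly bounded, so Proposition~\ref{reg} yields compactness in $\cs$. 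A second issue you gloss over is weak lower semicontinuity at critical growth: $\Phi$ is in general \emph{not} sequentially weakly l.s.c.\ on $\w$, and ``equi-absolute continuity along minimizing sequences'' is not available for free. The paper handles this by truncating $f$ at a level $\kappa_n>\|u_0\|_\infty+1$ chosen so that the truncated functional $\Phi_n$ agrees with $\Phi$ at $u_0$ and is $\eps_n$-close at $\tilde u_n$; it is $\Phi_n$ (now coercive and weakly l.s.c.) that is minimized over $B_n$, and only at the very end, once $\cs$-convergence forces $\|u_n\|_\infty<\kappa_n$, does one recover $\Phi_n(u_n)=\Phi(u_n)$ and reach the contradiction.
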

\begin{proof}
First we prove that \ref{svh1} implies \ref{svh2}. From \ref{svh1} and $\Phi\in C^1(\w)$ we deduce that for all $\varphi\in\w$
\[\langle\Phi'(u_0),\varphi\rangle = \lim_{\tau\to 0^+}\frac{\Phi(u_0+\tau\varphi)-\Phi(u_0)}{\tau} \ge 0.\]
Replacing $\varphi$ with $-\varphi$, we find $\Phi'(u_0)=0$. So $u_0$ is a solution of \eqref{dir}. By Proposition \ref{apb} we have $u_0\in L^\infty(\Omega)$, hence by Proposition \ref{reg} we deduce $u_0\in C^\alpha_s(\overline\Omega)$.
\vskip2pt
\noindent
We now argue by contradiction. Assume that there exists a sequence $(u_n)$ in $\w\cap\cs$ s.t.\ $u_n\to u$ in $\cs$, and for all $n\in\N$
\beq\label{svh3}
\Phi(u_n) < \Phi(u_0).
\eeq
In particular $u_n\to u_0$ in $L^\infty(\Omega)$, so by ${\bf H}_{\p}$ and dominated convergence we have
\[\lim_n\,\int_\Omega F(x,u_n)\,dx = \int_\Omega F(x,u_0)\,dx.\]
In turn, by \eqref{svh3} we get
\begin{align*}
\limsup_n\frac{\|u_n\|_{\w}^p}{p} &\le \limsup_n\Phi(u_n)+\lim_n\,\int_\Omega F(x,u_n)\,dx \\
&\le \Phi(u_0)+\int_\Omega F(x,u_0)\,dx \le \frac{\|u_0\|_{\w}^p}{p}.
\end{align*}
In particular, $(u_n)$ bounded in $\w$. By reflexivity, up to a relabeled subsequence we have $u_n\rightharpoonup u_0$ (recall that $u_n\to u_0$ in $L^\infty(\Omega)$). Therefore,
\begin{align*}
\|u_0\|_{\w} &\le \liminf_n\|u_n\|_{\w} \\
&\le \limsup_n\|u_n\|_{\w} \le \|u_0\|_{\w},
\end{align*}
i.e., $\|u_n\|_{\w}\to\|u_0\|_{\w}$. By uniform convexity we have $u_n\to u_0$ in $\w$. Thus, for all $n\in\N$ big enough $\|u_n-u_0\|_{\w}\le\rho$, which by \ref{svh1} implies
\[\Phi(u_n) \ge \Phi(u_0),\]
against \eqref{svh3}. This proves \ref{svh2}.
\vskip2pt
\noindent
Now we prove that \ref{svh2} implies \ref{svh1} (which is more involved). By \ref{svh2}, as above for all $\varphi\in\w\cap\cs$ we have
\[\langle\Phi'(u_0),\varphi\rangle \ge 0,\]
hence $\Phi'(u_0)\in W^{-s,p'}(\Omega)$ vanishes along all directions in $\w\cap\cs$. Since the latter is a dense subspace of $\w$, we have $\Phi'(u_0)=0$. As in the previous case, using Propositions \ref{apb} and \ref{reg} we find $u_0\in C^\alpha_s(\overline\Omega)$.
\vskip2pt
\noindent
Again we argue by contradiction, assuming that there exists a sequence $(\tilde u_n)$ in $\w$ s.t.\ $\tilde u_n\to u_0$ in $\w$, and for all $n\in\N$
\beq\label{svh4}
\Phi(\tilde u_n) < \Phi(u_0).
\eeq
For all $n\in\N$ set $\delta_n=\|\tilde u_n-u_0\|_{\p}>0$ and
\[B_n = \big\{u\in\w:\,\|u-u_0\|_{\p}\le\delta_n\big\}.\]
By the continuous embedding $\w\hookrightarrow L^{\p}(\Omega)$ we have $\delta_n\to 0$, and $B_n$ is a closed convex (hence, weakly closed) subset of $\w$. Anyway, $B_n$ is not weakly compact, nor $\Phi$ is sequentially weakly l.s.c.\ due to the possibly critical growth in ${\bf H}_{\p}$, so we must introduce a truncation.
\vskip2pt
\noindent
For all $\kappa>0$, $t\in\R$ set
\[[t]_\kappa = \begin{cases}
-\kappa & \text{if $t\le -\kappa$} \\
t & \text{if $-\kappa<t<\kappa$} \\
\kappa & \text{if $t\ge\kappa$.}
\end{cases}\]
By dominated convergence, for all $u\in\w$ we have
\[\lim_{\kappa\to\infty}\,\int_\Omega\int_0^u f(x,[t]_\kappa)\,dt\,dx = \int_\Omega F(x,u)\,dx.\]
By \eqref{svh4}, for all $n\in\N$ we may find $\eps_n\in(0,\Phi(u_0)-\Phi(\tilde u_n))$ and $\kappa_n>\|u_0\|_\infty+1$ s.t.\
\[\Big|\int_\Omega\int_0^{\tilde u_n} f(x,[t]_{\kappa_n})\,dt\,dx-\int_\Omega F(x,\tilde u_n)\,dx\Big| < \eps_n.\]
Set for all $(x,t)\in\Omega\times\R$
\[f_n(x,t) = f(x,[t]_{\kappa_n}), \qquad F_n(x,t) = \int_0^t f_n(x,\tau)\,d\tau.\]
By ${\bf H}_{\p}$ we have
\[|f_n(x,t)| \le C_0(1+\kappa_n^{\p-1}),\]
so we may define a truncated energy functional by setting for all $u\in\w$
\[\Phi_n(u) = \frac{\|u\|_{\w}^p}{p}-\int_\Omega F_n(x,u)\,dx.\]
By the estimate above on $f_n$, $\Phi_n\in C^1(\w)$ is sequentially weakly l.s.c.\ and coercive. So, there exists $u_n\in B_n$ s.t.\
\[\Phi_n(u_n) = \inf_{u\in B_n}\Phi_n(u).\]
By the choice of $\kappa_n$ we have $\Phi_n(u_0)=\Phi(u_0)$, and by construction
\[|\Phi_n(\tilde u_n)-\Phi(\tilde u_n)| = \Big|\int_\Omega F_n(x,\tilde u_n)\,dx-\int_\Omega F(x,\tilde u_n)\,dx\Big| < \eps_n.\]
Concatenating the inequalities and recalling the choice of $\eps_n$, we get
\[\Phi_n(u_n) \le \Phi_n(\tilde u_n) \le \Phi(\tilde u_n)+\eps_n < \Phi(u_0) = \Phi_n(u_0),\]
so that \eqref{svh4} is in a sense transferred onto the truncated functional $\Phi_n$. We will now write a Euler-Lagrange equation for the minimization problem solved by $u_n$. We claim that for all $n\in\N$ there exists $\lambda_n\ge 0$ s.t.\ $u_n$ is a weak solution of
\beq\label{svh5}
\begin{cases}
\fpl u_n+\lambda_n(u_n-u_0)^{\p-1} = f_n(x,u_n) & \text{in $\Omega$} \\
u_n = 0 & \text{in $\Omega^c$,}
\end{cases}
\eeq
in the sense of Definition \ref{sol} \ref{sol3}. Indeed, fix $n\in\N$ and distinguish two cases:
\begin{itemize}[leftmargin=1cm]
\item[$(a)$] If $\|u_n-u_0\|_{\p}<\delta_n$, then $u_n\in{\rm int}(B_n)$ (with respect to the $\w$-topology, by virtue of continuity of $\|\cdot\|_{\p}$) is a local minimizer of $\Phi_n$. Therefore we have
\[\Phi_n'(u_n) = 0,\]
i.e., \eqref{svh5} holds with $\lambda_n=0$.
\item[$(b)$] If $\|u_n-u_0\|_{\p}=\delta_n$, then $u_n$ minimizes $\Phi_n$ under the constraint
\[\Psi(u) := \frac{\|u-u_0\|_{\p}^{\p}}{\p} = \frac{\delta_n^{\p}}{\p}.\]
Note that $\Psi\in C^1(\w)$ with Fr\'echet derivative given for all $u,\varphi\in\w$ by
\[\langle\Psi'(u),\varphi\rangle = \int_\Omega(u-u_0)^{\p-1}\varphi\,dx.\]
By Lagrange's multipliers rule, there exists $\lambda_n\in\R$ s.t.\
\[\Phi'_n(u_n)+\lambda_n\Psi'(u_n) = 0,\]
i.e., \eqref{svh5} holds. It remains to determine the sign of $\lambda_n$. By minimization we have
\[\langle\Phi'_n(u_n),u_0-u_n\rangle = \lim_{\tau\to 0^+}\,\frac{\Phi_n(u_n+\tau(u_0-u_n))-\Phi_n(u_n)}{\tau} \ge 0.\]
Besides, since $u_n\in\partial B_n$ maximizes $\Psi$ over $B_n$, we have
\[\langle\Psi'(u_n),u_0-u_n\rangle = \lim_{\tau\to 0^+}\frac{\Psi(u_n+\tau(u_0-u_n))-\Psi(u_n)}{\tau} < 0.\]
Combining such inequalities we get
\[\lambda_n = -\frac{\langle\Phi'_n(u_n),u_0-u_n\rangle}{\langle\Psi'(u_n),u_0-u_n\rangle} \ge 0.\]
\end{itemize}
In either case $u_n$ solves the perturbed problem \eqref{svh5} for all $n\in\N$, with a (possibly unbounded) sequence $(\lambda_n)$ in $[0,+\infty[$. Regarding the solutions $(u_n)$, all we know for now is that $u_n\to u_0$ in $L^{\p}(\Omega)$.
\vskip2pt
\noindent
The rest of the proof is based on two uniform bounds on $(u_n)$. First we prove that there exists $C_1>0$, depending on $N,p,s,\Omega$, and $u_0$, s.t.\ for all $n\in\N$ (up to a subsequence) we have
\beq\label{svh6}
\|u_n\|_\infty \le C_1.
\eeq
Indeed, by ${\bf H}_{\p}$ we have for all $n\in\N$, a.e.\ $x\in\Omega$, and all $t\in\R$
\[|f_n(x,t)| \le C_0(1+|t|^{\p-1}),\]
with $C_0>0$ independent of $n$. Set for all $n\in\N$, $(x,t)\in\Omega\times\R$
\[g_n(x,t) = f_n(x,t)-\lambda_n(t-u_0(x))^{\p-1}.\]
Again we distinguish two cases:
\begin{itemize}[leftmargin=1cm]
\item[$(a)$] If $\lambda_n>C_0+1$, then for a.e.\ $x\in\Omega$ and all $t>0$ we have
\begin{align*}
g_n(x,t) &\le C_0(1+t^{\p-1})-\lambda_n(t-\|u_0\|_\infty)^{\p-1} \\
&\le (C_0-\lambda_n)t^{\p-1}+C \le -t^{\p-1}+C,
\end{align*}
for $C=C(N, p, s, u_0)$, and so for all $t>C_1(N, p, s, u_0)$  it holds
\[g_n(x,t) < 0.\]
Similarly, for a bigger $C_1>0$ if necessary, we have for a.e.\ $x\in\Omega$ and all $t<-C_1$
\[g_n(x,t) > 0.\]
Note that $C_1>0$ does not depend on $n$. The constant $C_1$ lies in $\widetilde{W}^{s,p}(\Omega)$. So, we test \eqref{svh5} with $(u_n-C_1)_+\in\w$ and apply Proposition \ref{mon} \ref{mon2}:
\[\langle\fpl u-\fpl C_1,(u-C_1)^+\rangle = \int_{\{u_n>C_1\}}g_n(x,u_n)(u_n-C_1)\,dx \le 0.\]
Therefore, $u_n\le C_1$ in $\Omega$. Similarly we get $u_n\ge -C_1$, thus proving \eqref{svh6}.
\item[$(b)$] If $\lambda_n\in[0,C_0+1]$, then we invoke ${\bf H}_{\p}$ to find $C(N, p, s, u_0)>0$  s.t.\ for all such $n\in\N$ it holds
\[|g_n(x,t)| \le C(1+|t|^{\p-1})\]
for a.e.\ $x\in\Omega$, and all $t\in\R$.
Let $\eps_0>0$ (depending on such $C$, and hence on $N,p,s,\Omega,$ and $u_0$) be as in Proposition \ref{apb} \ref{apb1}, and fix $K=2\|u_0\|_\infty>0$. Recalling that $u_n\to u_0$ in $L^{\p}(\Omega)$, for all $n\in\N$ big enough we have
\[\int_{\{|u_n|\ge K\}}|u_n|^{\p}\,dx < \eps_0.\]
Besides, $u_n$ solves \eqref{svh5}. By Proposition \ref{apb} \ref{apb1}, we can find $C_1>0$ (depending ultimately on $N,p,s,\Omega$, and $u_0$) s.t.\ \eqref{svh6} holds for all $n\in\N$.
\end{itemize}
This concludes the proof of estimate \eqref{svh6}, which  unfortunately does not suffice to control the perturbation $\lambda_n(u_n-u_0)^{\p-1}$ in \eqref{svh5}, as $(\lambda_n)$ may be unbounded. Hence, we need a second uniform bound as follows. There exists $C_2>0$, depending on $N,p,s,\Omega$, and $u_0$, s.t.\ for all $n\in\N$
\beq\label{svh7}
\lambda_n\|u_n-u_0\|_\infty^{\p-1} \le C_2.
\eeq
Indeed, fix $q\ge 1$. Testing \eqref{dir} and \eqref{svh5} with $(u_n-u_0)^q$ (which belongs to $\w$ by the boundedness of $u_n-u_0$) and using Proposition \ref{mon} \ref{mon3}, we have for all $n\in\N$
\begin{align*}
\lambda_n\int_\Omega|u_n-u_0|^{\p+q-1}\,dx &\le \langle\fpl u_n-\fpl u_0,(u_n-u_0)^q\rangle+\lambda_n\int_\Omega|u_n-u_0|^{\p+q-1}\,dx \\
&= \int_\Omega\big[f_n(x,u_n)-f(x,u_0)\big](u_n-u_0)^q\,dx \\
&\le C(1+\|u_0\|_\infty^{\p-1}+\|u_n\|_\infty^{\p-1})\|u_n-u_0\|_q^q.
\end{align*}
We use now \eqref{svh6} and H\"older's inequality to find $C_2>0$ depending on $N,p,s,\Omega$, and $u_0$, s.t.\ for all $n\in\N$, $q\ge 1$
\begin{align*}
\lambda_n\int_\Omega|u_n-u_0|^{\p+q-1}\,dx &\le C_2\int_\Omega|u_n-u_0|^q\,dx \\
&\le C_2\Big[\int_\Omega|u_n-u_0|^{\p+q-1}\,dx\Big]^\frac{q}{\p+q-1}\,|\Omega|^{1-\frac{q}{\p+q-1}},
\end{align*}
which rephrases as
\[\lambda_n\|u_n-u_0\|_{\p+q-1}^{\p-1} \le C_2|\Omega|^\frac{\p-1}{\p+q-1}.\]
Letting $q\to\infty$, we achieve \eqref{svh7}.
\vskip2pt
\noindent
We can now complete our proof. By the bounds \eqref{svh6} and \eqref{svh7}, we have for all $n\in\N$, a.e.\ $x\in\Omega$
\[|g_n(x,u_n)| \le C(1+\|u_n\|_\infty^{\p-1})+\lambda_n\|u_n-u_0\|_\infty^{\p-1} \le C,\]
with $C>0$ depending on $N,p,s,\Omega,$ and $u_0$. Hence, by Proposition \ref{reg} (with $g=g_n(\cdot,u_n)$) we have $u_n\in C^\alpha_s(\overline\Omega)$ with the bound
\[\|u_n\|_{C^\alpha_s(\overline\Omega)} \le C,\]
with $\alpha\in(0,s)$, $C>0$ independent of $n$. By the compact embedding $C^\alpha_s(\overline\Omega)\hookrightarrow\cs$, $(u_n)$ has a convergent subsequence in $\cs$. Recalling that $u_n\to u_0$ in $L^{\p}(\Omega)$, we have in fact $u_n\to u_0$ in $\cs$. Thus, for all $n\in\N$ big enough we have
\[\|u_n-u_0\|_{\cs} \le \sigma,\]
with $\sigma>0$ as in \ref{svh2}. In particular, $u_n\to u_0$ in $L^\infty(\Omega)$, hence for all $n\in\N$ big enough
\[\|u_n\|_\infty \le \|u_0\|_\infty+1 < \kappa_n.\]
By construction, then, we have for a.e.\ $x\in\Omega$
\[f_n(x,u_n) = f(x,u_n),\]
which in turn implies $\Phi_n(u_n)=\Phi(u_n)$. By the last relations and \eqref{svh4}, for all $n\in\N$ big enough we have
\[\Phi(u_n) < \Phi(u_0),\]
against \ref{svh2}. This proves \ref{svh1}, and concludes the argument.
\end{proof}

\begin{remark}\label{brock}
The approach followed here to prove Theorem \ref{svh} differs from that used in \cite{IMS}, in the sense that no strong monotonicity property is employed. In \cite{IMS}, the assumption $p\ge 2$ allowed a strong monotonicity inequality (Lemma 2.3), but for the singular case such property does not seem to be easily achieved. The simpler proof presented here is inspired by \cite{BIU}, with the necessary adaptations to the fractional, possibly critical setting.
\end{remark}

\section{Bifurcation result}\label{sec4}

\noindent
In this section we finally come back to problem $(P_\lambda)$ and we prove Theorem \ref{pqr}. Set
\[\Lambda = \big\{\lambda>0:\,\text{$(P_\lambda)$ has a solution}\big\}.\]
Also, for all $\lambda\in\Lambda$ we set
\[\mathcal{S}(\lambda) = \big\{u\in\w:\,\text{$u$ solution of $(P_\lambda)$}\big\}.\]
Most of the section will be devoted to studying the properties of the sets defined above. Fix $\lambda>0$. Set for all $t\in\R$
\[f_\lambda(t) = \lambda(t^+)^{q-1}-(t^+)^{r-1}.\]
Clearly $f_\lambda:\R\to\R$ satisfies ${\bf H}_q$ (with $q<p$). Set further
\[F_\lambda(t) = \int_0^t f_\lambda(\tau)\,d\tau,\]
and for all $u\in\w$
\[\Phi_\lambda(u) = \frac{\|u\|_{\w}^p}{p}-\int_\Omega F_\lambda(u)\,dx.\]
As in Section \ref{sec3}, $\Phi_\lambda\in C^1(\w)$ and its critical points satisfy weakly in $\Omega$
\[\fpl u = f_\lambda(u).\]
For clarity, we will divide the proof in several steps. Our first result states the non-existence of (positive) solutions of problem $(P_\lambda)$, for $\lambda>0$ small:

\begin{lemma}\label{non}
For all $\lambda>0$ small enough, $(P_\lambda)$ has no solutions.
\end{lemma}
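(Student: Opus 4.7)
\emph{Proof strategy.} The plan is a direct energy argument. I first aim to prove the pointwise bound
\[
f_\lambda(t):=\lambda t^{q-1}-t^{r-1} \le C_{p,q,r}\, \lambda^{(p-r)/(q-r)}\, t^{p-1} \qquad\text{for all }t\ge 0,
\]
and then test the equation against the solution itself, concluding via the Poincar\'e-type embedding $\w\hookrightarrow L^p(\Omega)$ (continuous since $p<\p$).

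To produce the pointwise inequality, I would interpolate the middle exponent $q-1$ between $r-1$ and $p-1$: setting $\alpha=(p-q)/(p-r)\in(0,1)$, one has $q-1=\alpha(r-1)+(1-\alpha)(p-1)$, hence $t^{q-1}=(t^{r-1})^\alpha (t^{p-1})^{1-\alpha}$. Weighted Young's inequality with a parameter $\delta>0$ then yields
\[
\lambda\, t^{q-1}\le \lambda\,\alpha\,\bigl[\delta/(1-\alpha)\bigr]^{-(1-\alpha)/\alpha}\,t^{r-1}+\lambda\,\delta\, t^{p-1}.
\]
The choice $\delta=(1-\alpha)(\lambda\alpha)^{\alpha/(1-\alpha)}$ makes the coefficient of $t^{r-1}$ equal to $1$ exactly, so subtracting $t^{r-1}$ delivers the claimed inequality with constant $C=(1-\alpha)\alpha^{\alpha/(1-\alpha)}$ and exponent $1/(1-\alpha)=(p-r)/(q-r)$.

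Now let $u\in\w$ be any solution of $(P_\lambda)$. Testing Definition \ref{sol}\ref{sol3} with $\varphi=u$ gives $\|u\|_{\w}^p=\int_\Omega f_\lambda(u)\,u\,dx$, and combining the pointwise bound with the Poincar\'e inequality $\|u\|_p^p\le C_P\|u\|_{\w}^p$ produces
\[
\|u\|_{\w}^p \le C\,C_P\,\lambda^{(p-r)/(q-r)}\,\|u\|_{\w}^p.
\]
Since $u>0$ in $\Omega$ is nontrivial, $\|u\|_{\w}>0$, so one deduces $\lambda\ge (CC_P)^{-(q-r)/(p-r)}=:\lambda_0$; hence no solution can exist for $\lambda\in(0,\lambda_0)$. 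No substantial obstacle arises: the only technical points are choosing the Young parameter so that the $t^{r-1}$ terms cancel exactly, and the observation that for a positive $u$ the duality pairing $\langle\fpl u,u\rangle$ coincides with $\|u\|_{\w}^p$ (Proposition \ref{mon}\ref{mon1}).
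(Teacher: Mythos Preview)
Your argument is correct and follows the same overall strategy as the paper: establish a pointwise bound $f_\lambda(t)\le c(\lambda)\,t^{p-1}$ for $t\ge 0$, test the equation with $u$, and conclude via the Poincar\'e inequality (equivalently, the variational characterization of the first eigenvalue $\hat\lambda_1$, so $C_P=1/\hat\lambda_1$).

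The only genuine difference lies in how the pointwise inequality is obtained. The paper argues by a simple case split: for $\lambda\le\min\{1,\eps\}$ with $\eps<\hat\lambda_1$, it observes that $f_\lambda(t)\le 0$ on $[0,1]$ (since $t^{q-1}\le t^{r-1}$ there) and $f_\lambda(t)\le\lambda t^{q-1}\le\eps t^{p-1}$ on $[1,\infty)$. Your route via the interpolation $t^{q-1}=(t^{r-1})^\alpha(t^{p-1})^{1-\alpha}$ and Young's inequality with an optimally chosen parameter is slightly more sophisticated but yields a sharper and fully quantitative statement: $f_\lambda(t)\le (1-\alpha)\alpha^{\alpha/(1-\alpha)}\lambda^{(p-r)/(q-r)}t^{p-1}$ for \emph{all} $\lambda>0$, giving the explicit threshold $\lambda_0=\bigl[(1-\alpha)\alpha^{\alpha/(1-\alpha)}/\hat\lambda_1\bigr]^{-(q-r)/(p-r)}$. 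The paper's version is more elementary; yours gives the precise power of $\lambda$ and an explicit lower bound on $\lambda_*$. A minor remark: the identity $\langle\fpl u,u\rangle=\|u\|_{\w}^p$ for $u\in\w$ is immediate from the definitions and does not require Proposition~\ref{mon}\ref{mon1}.
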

\begin{proof}
Let $\hat\lambda_1>0$ be the principal eigenvalue of $\fpl$ in $\w$ (see \cite{LL}) and choose $\eps\in(0,\hat\lambda_1)$. Setting  $\lambda_0=\min\{1, \eps\}>0$, we claim that for all $\lambda\in(0,\lambda_0)$ and all $t\ge 0$ it holds
\beq\label{non1}
f_\lambda(t) \le \eps t^{p-1}.
\eeq
Indeed, from $\lambda\le \lambda_0\le  1$ and $0<r<q$ we immediately  infer $f_\lambda(t)\le 0$ for  $t\in [0, 1]$. For $t\ge 1$ we instead have, due to $p>q$ and $\lambda\le \lambda_0\le \eps$, that
\[f_\lambda(t) \le \lambda t^{q-1} \le \eps t^{p-1},\]
concluding the proof of \eqref{non1}. Now let $\lambda\in(0,\lambda_0)$, $u\in\w$ be a critical point of $\Phi_\lambda$. Then, by \eqref{non1} we have
\begin{align*}
\|u\|_{\w}^p &= \langle\fpl u,u\rangle \\
&= \int_\Omega f_\lambda(u)u\,dx \le \eps\|u\|_p^p.
\end{align*}
By $\eps<\hat\lambda_1$ and the variational characterization of the principal eigenvalue, we have $u=0$. So, $(P_\lambda)$ has no (positive) solutions, or equivalently, $\lambda\notin\Lambda$.
\end{proof}

\begin{lemma}\label{exi}
For all $\lambda>0$ big enough, $(P_\lambda)$ has at least one solution.
\end{lemma}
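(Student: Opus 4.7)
The plan is to obtain $u_\lambda$ as a global minimizer of $\Phi_\lambda$ on $\w$, exploiting the fact that all powers in the reaction are strictly below $p-1$. Since $F_\lambda(t)\le(\lambda/q)(t^+)^q$ and $\w\hookrightarrow L^q(\Omega)$ continuously (as $q<p\le\p$), one has
\[\Phi_\lambda(u)\ge\frac{\|u\|_{\w}^p}{p}-C\lambda\,\|u\|_{\w}^q,\]
which is coercive because $q<p$. Compactness of the embeddings $\w\hookrightarrow L^q(\Omega),L^r(\Omega)$ makes $u\mapsto\int_\Omega F_\lambda(u)\,dx$ sequentially weakly continuous, so $\Phi_\lambda$ is sequentially weakly l.s.c. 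Standard direct methods then yield $u_\lambda\in\w$ with $\Phi_\lambda(u_\lambda)=\inf_{\w}\Phi_\lambda$.

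To ensure nontriviality, I would fix any $v\in\w_+\setminus\{0\}$ (a compactly supported positive bump, for instance) and observe that
\[\Phi_\lambda(v)=\frac{\|v\|_{\w}^p}{p}+\frac{1}{r}\|v\|_r^r-\frac{\lambda}{q}\|v\|_q^q\longrightarrow-\infty\quad\text{as }\lambda\to\infty.\]
Hence, for $\lambda$ sufficiently large, $\Phi_\lambda(u_\lambda)\le\Phi_\lambda(v)<0=\Phi_\lambda(0)$, so $u_\lambda\ne 0$. The Euler--Lagrange equation reads $\fpl u_\lambda=f_\lambda(u_\lambda)$; testing with $-u_\lambda^-\in\w$ and using $f_\lambda\equiv 0$ on $(-\infty,0]$, Proposition \ref{mon} \ref{mon1} gives $\|u_\lambda^-\|_{\w}^p\le 0$, whence $u_\lambda\ge 0$ in $\R^N$.

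The last step is to promote $u_\lambda\ge 0$ to strict positivity via Proposition \ref{smp}. By Proposition \ref{apb} \ref{apb3} (applicable since $f_\lambda$ satisfies ${\bf H}_q$ with $q<p$), $u_\lambda\in L^\infty(\Omega)$, and then Proposition \ref{reg} yields $u_\lambda\in C^\alpha_s(\overline\Omega)\subset C^0(\overline\Omega)$. Setting $h(t)=-f_\lambda(t)$, we have $h\in C^0(\R)$ with $h(0)=0$; moreover $h$ vanishes on $(-\infty,0]$ and is a difference of monotone power functions on $[0,\infty)$, hence lies in $BV_{\rm loc}(\R)$. Since $\fpl u_\lambda+h(u_\lambda)=0=h(0)$ in $\Omega$ and $u_\lambda\ne 0$, Proposition \ref{smp} yields $u_\lambda\in{\rm int}(\cs_+)$, so $u_\lambda$ is a positive solution of $(P_\lambda)$. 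The only delicate point is this last strict-positivity step: because the reaction is $(p-1)$-sublinear near zero, the classical local strong minimum principles would fail, but this is precisely the nonlocal phenomenon Proposition \ref{smp} is designed to handle.
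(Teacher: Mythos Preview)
Your proof is correct and follows essentially the same approach as the paper: direct minimization of $\Phi_\lambda$ (coercivity from $q<p$, weak lower semicontinuity), nontriviality for large $\lambda$ via a fixed positive test function, nonnegativity by testing with $-u_\lambda^-$, and strict positivity via Proposition~\ref{smp} with $h=-f_\lambda$. The only differences are cosmetic---the order of the steps and your explicit verification that $h\in BV_{\rm loc}(\R)$ and that $u_\lambda\in C^0(\overline\Omega)$ via Propositions~\ref{apb}~\ref{apb3} and~\ref{reg}.
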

\begin{proof}
Fix $\lambda>0$. The functional $\Phi_\lambda$, defined above, is sequentially weakly l.s.c.\ in $\w$. Also, $\Phi_\lambda$ is coercive in $\w$. Indeed, by the continuous embedding $\w\hookrightarrow L^q(\Omega)$, for all $u\in\w$ we have
\begin{align*}
\Phi_\lambda(u) &= \frac{\|u\|_{\w}^p}{p}-\lambda\frac{\|u\|_q^q}{q}+\frac{\|u\|_r^r}{r} \\
&\ge \frac{\|u\|_{\w}^p}{p}-C\lambda\|u\|_{\w}^q,
\end{align*}
and the latter tends to $\infty$ as $\|u\|_{\w}\to\infty$ (recall that $q<p$). So, there exists $u_\lambda\in\w$ s.t.\
\[\Phi_\lambda(u_\lambda) = \inf_{u\in\w}\Phi_\lambda(u) =: m_\lambda.\]
In particular, $u_\lambda$ is a critical point of $\Phi_\lambda$, i.e., a weak solution of
\beq\label{exi1}
\fpl u_\lambda = f_\lambda(u_\lambda).
\eeq
By Proposition \ref{reg} we have $u_\lambda\in C^\alpha_s(\overline\Omega)$. Testing \eqref{exi1} with $-u_\lambda^-\in\w$ and applying Proposition \ref{mon} \ref{mon1}, we get
\begin{align*}
\|u_\lambda^-\|_{\w}^p &\le \langle\fpl u_\lambda,-u_\lambda^-\rangle \\
&= \int_\Omega f_\lambda(u_\lambda)(-u_\lambda^-)\,dx \\
&= \int_{\{u_\lambda<0\}}f_\lambda(u_\lambda)u_\lambda\,dx = 0.
\end{align*}
Therefore, $u_\lambda\ge 0$ in $\Omega$. Now fix any $\bar u\in\w_+\setminus\{0\}$, then we have
\[\Phi_\lambda(\bar u) = \frac{\|\bar u\|_{\w}^p}{p}-\lambda\frac{\|\bar u\|_q^q}{q}+\frac{\|\bar u\|_r^r}{r},\]
and the latter tends to $-\infty$ as $\lambda\to\infty$. So, for all $\lambda>0$ big enough we have
\[m_\lambda \le \Phi_\lambda(\bar u) < 0,\]
hence $u_\lambda\neq 0$. By Proposition \ref{smp} (with $h=-f_\lambda$) we get $u_\lambda\in{\rm int}(\cs_+)$. So we have proved that $\lambda\in\Lambda$ for all $\lambda>0$ big enough.
\end{proof}

\noindent
By Lemmas \ref{non}, \ref{exi} we may set
\[\lambda_* = \inf\,\Lambda > 0.\]
We will now prove that $\lambda_*\in\Lambda$:

\begin{lemma}\label{str}
There exists a solution $u_*\in{\rm int}(\cs_+)$ of $(P_{\lambda_*})$.
\end{lemma}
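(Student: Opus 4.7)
The plan is to construct $u_*$ as a limit, in $\cs$, of solutions of $(P_{\lambda_n})$ for a sequence $\lambda_n\downarrow\lambda_*$, and then to apply Proposition \ref{smp} to conclude $u_*\in{\rm int}(\cs_+)$. By definition of $\lambda_*$ and Lemma \ref{exi}, I can pick $\lambda_n\downarrow\lambda_*$ with $\lambda_n\in\Lambda$ and $u_n\in\mathcal{S}(\lambda_n)$. Since $q<p$ and $(\lambda_n)$ is bounded, the reactions $f_{\lambda_n}$ satisfy ${\bf H}_q$ with a constant uniform in $n$, so Proposition \ref{apb} \ref{apb3} yields $\|u_n\|_{\w}+\|u_n\|_\infty\le C$. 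Consequently $f_{\lambda_n}(u_n)$ is uniformly bounded in $L^\infty(\Omega)$, and Proposition \ref{reg} then provides a uniform bound $\|u_n\|_{C^\alpha_s(\overline\Omega)}\le C$.

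By reflexivity of $\w$ and the compactness of the embedding $C^\alpha_s(\overline\Omega)\hookrightarrow\cs$, up to a subsequence $u_n\rightharpoonup u_*$ in $\w$ and $u_n\to u_*$ in $\cs$, hence uniformly in $\Omega$. The uniform convergence, coupled with the uniform $L^\infty$ bound on $f_{\lambda_n}(u_n)$, gives
\[\langle\fpl u_n,u_n-u_*\rangle=\int_\Omega f_{\lambda_n}(u_n)(u_n-u_*)\,dx\to 0,\]
so the $(S)_+$ property of $\fpl$ on $\w$ upgrades weak to strong convergence $u_n\to u_*$ in $\w$. Passing to the limit in the weak formulation of $(P_{\lambda_n})$ then shows that $u_*\in\w$ solves $\fpl u_*=\lambda_*(u_*^+)^{q-1}-(u_*^+)^{r-1}$ weakly, with $u_*\ge 0$ in $\Omega$.

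The main obstacle is ensuring $u_*\not\equiv 0$, since otherwise Proposition \ref{smp} is vacuous and $\lambda_*\in\Lambda$ does not follow. I will deduce this from a uniform positive lower bound on $\|u_n\|_\infty$ that is intrinsic to the sublinear reaction. Indeed, for any nontrivial non-negative weak solution $u$ of $(P_\lambda)$, testing with $u$ itself yields
\[0<\|u\|_{\w}^p=\int_\Omega(\lambda u^q-u^r)\,dx,\]
so $\lambda u^{q-r}>1$ on a set of positive measure, forcing $\|u\|_\infty>\lambda^{-1/(q-r)}$. Applied to each $u_n$ and passing to the limit (using $u_n\to u_*$ uniformly), this gives $\|u_*\|_\infty\ge\lambda_*^{-1/(q-r)}>0$, so $u_*\neq 0$. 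Finally, Proposition \ref{smp} applied with $h(t)=(t^+)^{r-1}-\lambda_*(t^+)^{q-1}\in C^0(\R)\cap BV_{\rm loc}(\R)$ (thanks to $1<r<q$, which makes $h$ continuous at $0$) yields $u_*\in{\rm int}(\cs_+)$, so in particular $u_*>0$ in $\Omega$ and $u_*$ is a solution of $(P_{\lambda_*})$.
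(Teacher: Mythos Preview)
Your proof is correct and follows essentially the same approach as the paper: approximate by $u_n\in\mathcal{S}(\lambda_n)$, obtain uniform $\w$, $L^\infty$, and $C^\alpha_s$ bounds via Propositions \ref{apb} \ref{apb3} and \ref{reg}, pass to the limit using the $(S)_+$-property, and apply Proposition \ref{smp}. The only cosmetic differences are that the paper phrases the $u_*\neq 0$ step as a contradiction (assuming $u_n\to 0$ uniformly and deriving $\|u_n\|_{\w}^p<0$) rather than your direct lower bound $\|u_n\|_\infty>\lambda_n^{-1/(q-r)}$, and it applies Proposition \ref{smp} with $h(t)=(t^+)^{r-1}$ instead of your $h(t)=(t^+)^{r-1}-\lambda_*(t^+)^{q-1}$; both choices are valid.
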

\begin{proof}
Let $(\lambda_n)$ be a sequence in $\Lambda$, s.t.\ $\lambda_n\to\lambda_*$. Clearly, for all $n\in\N$ we have $\lambda_n\le M$ ($M>0$ independent of $n$), and there exists a solution $u_n\in{\rm int}(\cs_+)$ of $(P_{\lambda_n})$. For all $n\in\N$, $t\in\R$ we have
\begin{align*}
|f_{\lambda_n}(t)| &\le M|t|^{q-1}+|t|^{r-1} \\
&\le C(1+|t|^{q-1}),
\end{align*}
with $C>0$ independent of $n$ and $q<p$. By Proposition \ref{apb} \ref{apb3}, we have $\|u_n\|_{\w},\|u_n\|_\infty \le C$. Due to reflexivity of $\w$ and the compact embeddings $\w\hookrightarrow L^q(\Omega),\,L^r(\Omega)$, passing to a subsequence we have $u_n\rightharpoonup u_*$ in $\w$, $u_n\to u_*$ in both $L^q(\Omega)$ and $L^r(\Omega)$, and $u_n(x)\to u_*(x)$ for a.e.\ $x\in\Omega$, hence $u_*\ge 0$ in $\Omega$.
\vskip2pt
\noindent
Test $(P_{\lambda_n})$ with $u_n-u_*$:
\begin{align*}
\langle\fpl u_n,u_n-u_*\rangle &= \int_\Omega(\lambda_nu_n^{q-1}-u_n^{r-1})(u_n-u_*)\,dx \\
&\le M\|u_n\|_q^{q-1}\|u_n-u_*\|_q+\|u_n\|_r^{r-1}\|u_n-u_*\|_r,
\end{align*}
and the latter tends to $0$ as $n\to\infty$. By the $(S)_+$-property of $\fpl$, we have $u_n\to u_*$ in $\w$. Besides, by Proposition \ref{reg} and the uniform $L^\infty(\Omega)$-bound on $(u_n)$, we deduce that $(u_n)$ is bounded in $C^\alpha_s(\overline\Omega)$. Passing to a further subsequence, we have $u_n\to u_*$ in $\cs$. We can now pass to the limit in $(P_{\lambda_n})$ and see that weakly in $\Omega$
\[\fpl u_* = f_{\lambda_*}(u_*).\]
To conclude, we need to prove that $u_*\neq 0$. Arguing by contradiction, assume that $u_n\to 0$ uniformly on $\overline\Omega$. Then, for all $n\in\N$ big enough we have
\[\|u_n\|_\infty \le \Big(\frac{1}{2M}\Big)^\frac{1}{q-r},\]
which in turn implies
\[\|u_n\|_q^q \le \|u_n\|_\infty^{q-r}\|u_n\|_r^r \le \frac{\|u_n\|_r^r}{2M}.\]
Testing $(P_{\lambda_n})$ with $u_n$ we get then
\begin{align*}
\|u_n\|_{\w}^p &= \langle\fpl u_n,u_n\rangle \\
&= \lambda_n\|u_n\|_q^q-\|u_n\|_r^r \\
&\le -\frac{\|u_n\|_r^r}{2} < 0,
\end{align*}
a contradiction. So $u_*\neq 0$. Finally, we have weakly in $\Omega$
\[\fpl u_*+u_*^{r-1} = \lambda_*u_*^{q-1} \ge 0,\]
so by Proposition \ref{smp} (with $h(t)=(t^+)^{r-1}$) we deduce $u_*\in{\rm int}(\cs_+)$. Thus, $\lambda_*\in\Lambda$ and $u_*\in\mathcal{S}(\lambda_*)$.
\end{proof}

\begin{remark}\label{comp}
Arguing as in Lemma \ref{str} above, one can easily retrieve more general information about the sets $\Lambda$ and $\mathcal{S}(\lambda)$. First, considering a constant sequence $\lambda_n=\lambda$, we see that $\mathcal{S}(\lambda)$ is compact in both $\w$ and $\cs$ for all $\lambda\in\Lambda$. Besides, taking a sequence $\lambda_n\to\lambda$, we see that $\lambda\in\Lambda$ and hence the latter is a closed subset of $\R$.
\end{remark}

\noindent
Next, we prove a subsolution principle for $(P_\lambda)$:

\begin{lemma}\label{sub}
Let $\lambda>0$, $\underline u\in\w_+\setminus\{0\}$ be a subsolution of $(P_\lambda)$. Then, there exists a solution $u_\lambda\in{\rm int}(\cs_+)$ of $(P_\lambda)$ s.t.\ $u_\lambda\ge\underline u$ in $\Omega$.
\end{lemma}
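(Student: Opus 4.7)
My plan is to construct $u_\lambda$ as a global minimizer of an energy functional in which the reaction has been truncated from below by $\underline u$, and then verify that the constraint $u\ge\underline u$ is respected by the minimizer for free.

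I would first introduce the Carath\'eodory truncation
$$\hat f_\lambda(x, t) = \begin{cases} f_\lambda(\underline u(x)) & \text{if } t \le \underline u(x), \\ f_\lambda(t) & \text{if } t > \underline u(x), \end{cases}$$
with primitive $\hat F_\lambda(x,t)=\int_0^t \hat f_\lambda(x,\tau)\,d\tau$ and associated functional $\hat\Phi_\lambda(u)=\|u\|_\w^p/p-\int_\Omega \hat F_\lambda(x,u)\,dx$. Even though $\underline u$ may not be bounded a priori, the inclusion $\underline u\in\w\hookrightarrow L^{\p}(\Omega)$, H\"older's inequality, and the fact that $q,r<p<\p$ make $\hat\Phi_\lambda\in C^1(\w)$. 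The pointwise bound
$$|\hat F_\lambda(x,t)|\le C\big(|t|^q+|t|^r+\underline u^{q}+\underline u^{r}+|t|\,\underline u^{q-1}+|t|\,\underline u^{r-1}\big),$$
together with $q,r<p$, yields coercivity of $\hat\Phi_\lambda$; sequential weak lower semi-continuity follows from the compact embeddings $\w\hookrightarrow L^q(\Omega),L^r(\Omega),L^1(\Omega)$. Hence $\hat\Phi_\lambda$ attains its infimum at some $u_\lambda\in\w$, which weakly satisfies $\fpl u_\lambda=\hat f_\lambda(\cdot,u_\lambda)$.

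The decisive step is the comparison $u_\lambda\ge\underline u$ in $\Omega$. Using $(\underline u-u_\lambda)^+\in\w_+$ as a common test function in the subsolution inequality for $\underline u$ and in the Euler--Lagrange equation for $u_\lambda$, and subtracting, one obtains
$$\big\langle\fpl\underline u-\fpl u_\lambda,(\underline u-u_\lambda)^+\big\rangle\le\int_\Omega\big[f_\lambda(\underline u)-\hat f_\lambda(\cdot,u_\lambda)\big](\underline u-u_\lambda)^+\,dx.$$
On $\{\underline u>u_\lambda\}$ the definition of $\hat f_\lambda$ forces $\hat f_\lambda(x,u_\lambda)=f_\lambda(\underline u(x))$, so the right-hand side vanishes; Proposition~\ref{mon}\ref{mon2} then yields $\underline u\le u_\lambda$ in $\Omega$.

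Once we know $u_\lambda\ge\underline u\ge 0$ a.e.\ in $\Omega$, the truncation is inactive: $\hat f_\lambda(\cdot,u_\lambda)=f_\lambda(u_\lambda)$, and $u_\lambda$ is a genuine weak solution of $(P_\lambda)$; nontriviality follows from $u_\lambda\ge\underline u\neq 0$. Propositions~\ref{apb}\ref{apb3} and~\ref{reg} place $u_\lambda$ in $\cs$, and rewriting the equation as $\fpl u_\lambda+(u_\lambda^+)^{r-1}=\lambda(u_\lambda^+)^{q-1}\ge 0$ lets us invoke Proposition~\ref{smp} with $h(t)=(t^+)^{r-1}$ to conclude $u_\lambda\in\mathrm{int}(\cs_+)$. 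I expect the main technical obstacle to be setting up the truncation cleanly when $\underline u\notin L^\infty(\Omega)$ is not guaranteed, keeping track of the mixed lower-order terms $|t|\,\underline u^{q-1}$, $|t|\,\underline u^{r-1}$ in the coercivity and $C^1$ estimates; the comparison via Proposition~\ref{mon}\ref{mon2} and the strong minimum principle of Proposition~\ref{smp} are then direct applications of the tools already recorded.
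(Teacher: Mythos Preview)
Your proposal is correct and follows essentially the same route as the paper: truncate $f_\lambda$ from below at $\underline u$, minimize the resulting coercive, weakly l.s.c.\ functional $\hat\Phi_\lambda$, compare with $\underline u$ via the test function $(\underline u-u_\lambda)^+$ and Proposition~\ref{mon}\ref{mon2}, then remove the truncation and apply Propositions~\ref{apb}\ref{apb3}, \ref{reg}, and~\ref{smp}. The only notable difference is that the paper simply asserts $\hat f_\lambda$ satisfies ${\bf H}_q$ (which, strictly speaking, requires $\underline u\in L^\infty(\Omega)$, a condition implicitly met in every later application), whereas you handle the potentially unbounded $\underline u$ explicitly through the mixed terms $|t|\,\underline u^{q-1}$, $|t|\,\underline u^{r-1}$; this makes your version slightly more self-contained for the lemma as stated.
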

\begin{proof}
Without loss of generality, we may assume that $\underline u\notin\mathcal{S}(\lambda)$. So set for all $(x,t)\in\Omega\times\R$
\[\hat f_\lambda(x,t) = \begin{cases}
f_\lambda(\underline u(x)) & \text{if $t\le\underline u(x)$} \\
f_\lambda(t) & \text{if $t>\underline u(x)$.}
\end{cases}\]
Clearly, $\hat f_\lambda$ satisfies ${\bf H}_q$. Define $\hat F_\lambda$, $\hat\Phi_\lambda$ as above. Reasoning as in Lemma \ref{exi}, we find $u_\lambda\in\w$ s.t.\
\[\hat\Phi_\lambda(u_\lambda) = \inf_{u\in\w}\,\hat\Phi_\lambda(u).\]
In particular, we have weakly in $\Omega$
\beq\label{sub1}
\fpl u_\lambda = \hat f_\lambda(x,u_\lambda).
\eeq
Testing with $(\underline u-u_\lambda)^+\in\w$, we have
\begin{align*}
\langle\fpl\underline u-\fpl u_\lambda,(\underline u-u_\lambda)^+\rangle &\le \int_\Omega\big[f_\lambda(\underline u)-\hat f_\lambda(x,u_\lambda)\big](\underline u-u_\lambda)^+\,dx \\
&= \int_{\{u_\lambda<\underline u\}}\big[f_\lambda(\underline u)-f_\lambda(\underline u)\big](\underline u-u_\lambda)\,dx = 0.
\end{align*}
By Proposition \ref{mon} \ref{mon2} we have $u_\lambda\ge\underline u$ in $\Omega$. Since $\underline{u}\ge 0$ and $\underline u\ne 0$ by assumption, the same holds true for $u_\lambda$. So, in \eqref{sub1} we can replace $\hat f_\lambda$ with $f_\lambda$. By Proposition \ref{reg} we have $u_\lambda\in C^\alpha_s(\overline\Omega)$. Noting that weakly in $\Omega$
\[\fpl u_\lambda+u_\lambda^{r-1} = \lambda u_\lambda^{q-1} \ge 0,\]
by Proposition \ref{smp} (with $h(t)=(t^+)^{r-1}$) we have $u_\lambda\in{\rm int}(\cs_+)$, in particular $u\in\mathcal{S}(\lambda)$.
\end{proof}

\noindent
Applying Lemma \ref{sub} and the mountain pass theorem, we can now prove multiplicity and comparison of solutions for $(P_\lambda)$ for $\lambda>\lambda_*$:

\begin{lemma}\label{mul}
For all $\lambda>\lambda_*$ there exist two solutions $u_\lambda,v_\lambda\in{\rm int}(\cs_+)$ of $(P_\lambda)$, s.t.\ $u_\lambda-v_\lambda\in{\rm int}(\cs_+)$.
\end{lemma}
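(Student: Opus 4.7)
The plan is to combine the sub/supersolution machinery from Lemma \ref{sub} with a mountain pass argument applied to a truncated energy. First I construct the larger solution $u_\lambda$. Since $\lambda_*\in\Lambda$ by Lemma \ref{str}, the function $u_{\lambda_*}\in\mathcal{S}(\lambda_*)$ is a strict subsolution of $(P_\lambda)$, since $\fpl u_{\lambda_*}=\lambda_*u_{\lambda_*}^{q-1}-u_{\lambda_*}^{r-1}<\lambda u_{\lambda_*}^{q-1}-u_{\lambda_*}^{r-1}$ in $\Omega$. Lemma \ref{sub} applied with $\underline u=u_{\lambda_*}$ produces $u_\lambda\in\mathcal{S}(\lambda)$ with $u_\lambda\ge u_{\lambda_*}$, and Proposition \ref{scp} (with $h(t)=(t^+)^{r-1}$) upgrades this bound to $u_\lambda-u_{\lambda_*}\in{\rm int}(\cs_+)$.

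A pivotal intermediate step is to show that $u_\lambda$ is a local minimizer of $\Phi_\lambda$ in the $\w$-topology. Inspecting the proof of Lemma \ref{sub}, $u_\lambda$ globally minimizes the truncated functional $\hat\Phi_\lambda$; a direct computation reveals that on $\{u\ge u_{\lambda_*}\}$ one has $\hat\Phi_\lambda=\Phi_\lambda+c_*$ with $c_*=\int_\Omega[u_{\lambda_*}f_\lambda(u_{\lambda_*})-F_\lambda(u_{\lambda_*})]\,dx$ independent of $u$. Hence $u_\lambda$ minimizes $\Phi_\lambda$ over the set $\{u\ge u_{\lambda_*}\}$. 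Since $u_\lambda-u_{\lambda_*}\in{\rm int}(\cs_+)$, this set contains a $\cs$-ball around $u_\lambda$, so $u_\lambda$ is a $\cs$-local minimizer of $\Phi_\lambda$, and Theorem \ref{svh} promotes it to a $\w$-local minimizer.

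To produce the second solution $v_\lambda$, I truncate $f_\lambda$ from above at $u_\lambda(x)$ and from below at $0$ by setting
\[
\tilde f_\lambda(x,t)=\begin{cases}0&\text{if }t\le 0,\\ f_\lambda(t)&\text{if }0<t\le u_\lambda(x),\\ f_\lambda(u_\lambda(x))&\text{if }t>u_\lambda(x),\end{cases}
\]
and consider the corresponding $\tilde\Phi_\lambda\in C^1(\w)$. Boundedness of $\tilde f_\lambda$ yields coercivity, sequential weak lower semicontinuity, and the Palais--Smale condition (via the $(S)_+$ property of $\fpl$, as in Lemma \ref{str}). Proposition \ref{mon} \ref{mon2} applied with test functions $-u^-$ and $(u-u_\lambda)^+$ forces any critical point $u$ of $\tilde\Phi_\lambda$ to satisfy $0\le u\le u_\lambda$ in $\Omega$, so $u$ is a genuine solution of $(P_\lambda)$. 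Since $r<q$, for $u$ with $\|u\|_{\cs}$ small one has $0\le u\le u_\lambda$ pointwise and the elementary estimate $\|u^+\|_q^q\le\|u\|_\infty^{q-r}\|u^+\|_r^r$ yields $\tilde\Phi_\lambda(u)\ge\|u\|_{\w}^p/p>0=\tilde\Phi_\lambda(0)$, so $0$ is a strict $\cs$-local and hence, via Theorem \ref{svh}, a strict $\w$-local minimum of $\tilde\Phi_\lambda$.

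Noting $\tilde\Phi_\lambda(u_\lambda)=\Phi_\lambda(u_\lambda)$, the mountain pass theorem applied to $\tilde\Phi_\lambda$ between $0$ and $u_\lambda$ delivers a critical point $v_\lambda$ at a minimax level $c\ge\inf_{\partial B_\rho(0)}\tilde\Phi_\lambda>0$, forcing $v_\lambda\neq 0$. The delicate part, which I expect to be the main obstacle, is excluding $v_\lambda=u_\lambda$: since the inequality $c\ge\Phi_\lambda(u_\lambda)$ need not be strict, one must upgrade the argument by showing that $u_\lambda$ is itself a local minimum of $\tilde\Phi_\lambda$ (its values coincide with those of $\Phi_\lambda$ on the one-sided $\cs$-neighborhood $\{u_{\lambda_*}\le u\le u_\lambda\}$ of $u_\lambda$, while the extension of $\tilde F_\lambda$ by the tangent line of $F_\lambda$ at $u_\lambda(x)$ handles nearby $v$ with $v>u_\lambda$), and then invoking the variant of the mountain pass theorem for two distinct strict local minima (Pucci--Serrin/Ghoussoub--Preiss type), which guarantees a third critical point distinct from both endpoints. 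Once $v_\lambda\neq 0,u_\lambda$ is secured, $v_\lambda$ solves $(P_\lambda)$; Proposition \ref{smp} (with $h(t)=(t^+)^{r-1}$) gives $v_\lambda\in{\rm int}(\cs_+)$, and Proposition \ref{scp} applied to the pair $(u_\lambda,v_\lambda)$ with the same $h$ (taking $K=\lambda\|u_\lambda\|_\infty^{q-1}$) yields $u_\lambda-v_\lambda\in{\rm int}(\cs_+)$.
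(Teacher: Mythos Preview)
Your overall architecture (strict subsolution $u_{\lambda_*}$, Lemma~\ref{sub}, Sobolev--H\"older principle, mountain pass between two local minima, then \ref{smp}/\ref{scp}) matches the paper's. The gap is precisely where you flag it: with your truncation $\tilde f_\lambda(x,t)=f_\lambda(u_\lambda(x))$ for $t>u_\lambda(x)$, the claim that $u_\lambda$ is a local minimizer of $\tilde\Phi_\lambda$ is not justified. Your ``tangent line'' extension gives $\tilde F_\lambda(x,t)=F_\lambda(u_\lambda)+f_\lambda(u_\lambda)(t-u_\lambda)$, so $\tilde F_\lambda\le F_\lambda$ would require $F_\lambda$ to be convex (i.e.\ $f_\lambda$ nondecreasing) near $u_\lambda(x)$. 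But $f_\lambda'(t)=t^{r-2}\big[\lambda(q-1)t^{q-r}-(r-1)\big]$ is negative for small $t>0$, and $u_\lambda(x)\to 0$ at $\partial\Omega$; hence $\tilde F_\lambda>F_\lambda$ (and $\tilde\Phi_\lambda<\Phi_\lambda$) can occur along perturbations supported near the boundary, so the local minimality of $u_\lambda$ for $\Phi_\lambda$ does not transfer to $\tilde\Phi_\lambda$ by the inequality argument you suggest.

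The paper avoids this by a different truncation that freezes only the $(q-1)$-power and keeps the $(r-1)$-power alive: for $t\ge u_\lambda(x)$ it sets $\tilde f_\lambda(x,t)=\lambda u_\lambda(x)^{q-1}-t^{r-1}$. Since $u_\lambda^{q-1}\le t^{q-1}$ whenever $t\ge u_\lambda$, this yields $\tilde f_\lambda(x,t)\le f_\lambda(t)$ for all $t$, hence $\tilde F_\lambda\le F_\lambda$ and $\tilde\Phi_\lambda\ge\Phi_\lambda$ on all of $\w$. Combined with $\tilde\Phi_\lambda(u_\lambda)=\Phi_\lambda(u_\lambda)$, this immediately makes $u_\lambda$ a $\w$-local minimizer of $\tilde\Phi_\lambda$, and the Pucci--Serrin mountain pass then produces $v_\lambda\notin\{0,u_\lambda\}$ with no further work. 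Note also that this truncation still satisfies ${\bf H}_q$ and keeps $\tilde\Phi_\lambda$ coercive, and the comparison $v_\lambda\le u_\lambda$ still follows from testing with $(v_\lambda-u_\lambda)^+$ because on $\{v_\lambda>u_\lambda\}$ one gets $\tilde f_\lambda(x,v_\lambda)-f_\lambda(u_\lambda)=u_\lambda^{r-1}-v_\lambda^{r-1}\le 0$. Replacing your constant extension with this one closes the gap; the rest of your argument goes through unchanged.
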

\begin{proof}
Let $u_*\in\mathcal{S}(\lambda_*)$ be given by Lemma \ref{str}. We have weakly in $\Omega$
\[\fpl u_* = \lambda_*u_*^{q-1}-u_*^{r-1} < \lambda u_*^{q-1}-u_*^{r-1},\]
hence $u_*$ is a strict subsolution of $(P_\lambda)$. Applying Lemma \ref{sub} with $\underline u=u_*$, we find $u_\lambda\in\mathcal{S}(\lambda)$ s.t.\ $u_\lambda\ge u_*$ in $\Omega$, along with $u_\lambda\neq u_*$. More precisely, since weakly in $\Omega$
\[
\fpl u_*+u_*^{r-1} = \lambda_*u_*^{q-1} < \lambda u_\lambda^{q-1} = \fpl u_\lambda+u_\lambda^{r-1},
\]
by Proposition \ref{scp} (with $h(t)=(t^+)^{r-1}$) we have $u_\lambda-u_*\in{\rm int}(\cs_+)$. In addition, $u_\lambda$ is a global minimizer of the truncated functional $\hat\Phi_\lambda$ in $\w$. Define now
\[U = \big\{u_*+v:\,v\in\w\cap{\rm int}(\cs_+)\big\},\]
a relatively open set in $\cs$ s.t.\ $u_\lambda\in U$. For a.e.\ $x\in\Omega$ and all $t>u_*(x)$ we have
\begin{align*}
\hat F_\lambda(x,t) &= \int_0^{u_*}f_\lambda(u_*)\,d\tau+\int_{u_*}^t f_\lambda(\tau)\,d\tau \\
&= F_\lambda(t)-\big[F_\lambda(u_*)-f_\lambda(u_*)u_*\big].
\end{align*}
Therefore, for all $u\in U$ we have
\begin{align*}
\hat\Phi_\lambda(u) &= \frac{\|u\|_{\w}^p}{p}-\int_\Omega\hat F_\lambda(x,u)\,dx \\
&= \frac{\|u\|_{\w}^p}{p}-\int_\Omega F_\lambda(u)\,dx+\int_\Omega\big[F_\lambda(u_*)-f_\lambda(u_*)u_*\big]\,dx \\
&= \Phi_\lambda(u)-C,
\end{align*}
with $C\in\R$ independent of $u$. So, for all $u\in U$
\[\Phi_\lambda(u) = \hat\Phi_\lambda(u)+C \ge \hat\Phi_\lambda(u_\lambda)+C = \Phi_\lambda(u_\lambda).\]
Therefore, that $u_\lambda$ is a local minimizer of $\Phi_\lambda$ in $\cs$. By Theorem \ref{svh}, it is so in $\w$ as well.
\vskip2pt
\noindent
Set for all $(x,t)\in\Omega\times\R$
\[\tilde f_\lambda(x,t) = \begin{cases}
f_\lambda(t) & \text{if $t<u_\lambda(x)$} \\
\lambda u_\lambda^{q-1}(x)-t^{r-1} & \text{if $t\ge u_\lambda(x)$.}
\end{cases}\]
Clearly, $\tilde f_\lambda:\Omega\times\R\to\R$ satisfies ${\bf H}_q$ and for a.e.\ $x\in\Omega$, $t\in\R$
\[\tilde f_\lambda(x,t) \le f_\lambda(t).\]
Define $\tilde F_\lambda$, $\tilde\Phi_\lambda$ as usual. Integrating both sides of the inequality above, and recalling that both functions vanish at $t\le 0$, we have for a.e.\ $x\in\Omega$ and all $t\in\R$
\[\tilde F_\lambda(x,t) \le F_\lambda(t).\]
This in turn implies for all $u\in\w$
\beq\label{mul1}
\tilde\Phi_\lambda(u) \ge \Phi_\lambda(u).
\eeq
Let $\rho>0$ be s.t.\ $\Phi_\lambda(u)\ge\Phi_\lambda(u_\lambda)$ for all $u\in\w$, $\|u\|_{\w}\le\rho$. Then, for the same $u$'s we have by \eqref{mul1}
\[\tilde\Phi_\lambda(u) \ge \Phi_\lambda(u) \ge \Phi_\lambda(u_\lambda) = \tilde\Phi_\lambda(u_\lambda),\]
i.e., $u_\lambda$ is a local minimizer of $\tilde\Phi_\lambda$.
\vskip2pt
\noindent
We will now prove that $0$ is another local minimizer of $\tilde\Phi_\lambda$. Indeed, set
\[\delta = \lambda^{-\frac{1}{q-r}} > 0.\]
Then, for all $t\in[0,\delta]$
\[f_\lambda(t) = t^{r-1}(\lambda t^{q-r}-1) \le 0,\]
which in turn implies for a.e.\ $x\in\Omega$ and all $t\in[0,\delta]$
\[\tilde F_\lambda(x,t) \le F_\lambda(t) \le 0.\]
There exists $\sigma>0$ s.t.\ for all $u\in\cs$, $\|u\|_{\cs}\le\sigma$ we have $\|u\|_\infty\le\delta$. So, for all $u\in\w\cap\cs$ with $0<\|u\|_{\cs}\le\sigma$ we have
\[\tilde\Phi_\lambda(u) \ge \frac{\|u\|_{\w}^p}{p} > 0 = \tilde\Phi_\lambda(0),\]
hence $0$ is a (strict) local minimizer of $\tilde\Phi_\lambda$ in $\cs$. By Theorem \ref{svh}, it is so in $\w$ as well.
\vskip2pt
\noindent
By \eqref{mul1} and the proof of Lemma \ref{exi}, the functional $\tilde\Phi_\lambda\in C^1(\w)$ is coercive, in particular it satisfies the Palais-Smale condition. In addition, $\tilde\Phi_\lambda$ has two distinct local minima at $u_\lambda$ and $0$, respectively. By the mountain pass theorem (see \cite[Corollary 1]{PS}), there exists a critical point $v_\lambda\in\w\setminus\{u_\lambda,0\}$ of $\tilde\Phi_\lambda$, which satisfies weakly in $\Omega$
\beq\label{mul2}
\fpl v_\lambda = \tilde f_\lambda(x,v_\lambda).
\eeq
Testing \eqref{mul2} with $-v_\lambda^-\in\w$ and applying Proposition \ref{mon} \ref{mon1}, we get
\begin{align*}
\|v_\lambda^-\|_{\w}^p &\le \langle\fpl v_\lambda,-v_\lambda^-\rangle \\
&= \int_\Omega\tilde f_\lambda(x,v_\lambda)(-v_\lambda^-)\,dx \\
&= \int_{\{v_\lambda<0\}}f_\lambda(v_\lambda)v_\lambda\,dx = 0,
\end{align*}
hence $v_\lambda\ge 0$ in $\Omega$. Besides, testing both \eqref{mul2} and $(P_\lambda)$ with $(v_\lambda-u_\lambda)^+\in\w$, we get
\begin{align*}
\langle\fpl v_\lambda-\fpl u_\lambda,(v_\lambda-u_\lambda)^+\rangle &= \int_\Omega\big[\tilde f_\lambda(x,v_\lambda)-f_\lambda(u_\lambda)\big](v_\lambda-u_\lambda)^+\,dx \\
&= \int_{\{v_\lambda>u_\lambda\}}(u_\lambda^{r-1}-v_\lambda^{r-1})(v_\lambda-u_\lambda)\,dx \le 0.
\end{align*}
By Proposition \ref{mon} \ref{mon2} we have $v_\lambda\le u_\lambda$ in $\Omega$. Therefore, in \eqref{mul2} we can replace $\tilde f_\lambda$ with $f_\lambda$. By Proposition \ref{reg} we have $v_\lambda\in C^\alpha_s(\overline\Omega)$, $0\le v_\lambda\le u_\lambda$ in $\Omega$, and $v_\lambda\neq 0,u_\lambda$.
\vskip2pt
\noindent
To conclude, note that weakly in $\Omega$
\[\fpl v_\lambda+v_\lambda^{r-1} = \lambda v_\lambda^{q-1} \ge 0,\]
hence by Proposition \ref{smp} (with $h(t)=(t^+)^{r-1}$) we have $v_\lambda\in{\rm int}(\cs_+)$. Thus, $v_\lambda\in\mathcal{S}(\lambda)$. Also, we have weakly in $\Omega$
\begin{align*}
\fpl v_\lambda+v_\lambda^{r-1} &= \lambda v_\lambda^{q-1} \\
&\le \lambda u_\lambda^{q-1} = \fpl u_\lambda+u_\lambda^{r-1},
\end{align*}
hence by Proposition \ref{scp} (with $h(t)=(t^+)^{r-1}$) we have $u_\lambda-v_\lambda\in{\rm int}(\cs_+)$, which concludes the proof.
\end{proof}

\noindent
In our last lemma, we prove the existence of a biggest solution (in the sense of pointwise ordering) for $(P_\lambda)$, for all $\lambda>\lambda_*$, with strictly increasing dependence on $\lambda$:

\begin{lemma}\label{big}
For all $\lambda\ge\lambda_*$ there exists a biggest solution $\hat u_\lambda\in{\rm int}(\cs_+)$ of $(P_\lambda)$. Also, for all $\lambda>\mu\ge\lambda_*$ we have $\hat u_\lambda-\hat u_\mu\in{\rm int}(\cs_+)$.
\end{lemma}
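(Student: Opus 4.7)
The plan is to exhibit $\hat u_\lambda$ as the maximizer of a linear functional on the (compact) solution set $\mathcal{S}(\lambda)$ after establishing upward directedness of $\mathcal{S}(\lambda)$, and then read the strict monotonicity off from the strong comparison principle (Proposition \ref{scp}). The crux is the following lattice property: for any $u_1,u_2\in\mathcal{S}(\lambda)$, the function $w:=\max(u_1,u_2)\in\w_+$ is a weak subsolution of $(P_\lambda)$. Granting this, Lemma \ref{sub} applied with $\underline u=w$ yields some $u_3\in\mathcal{S}(\lambda)$ with $u_3\ge w\ge u_1,u_2$, making $\mathcal{S}(\lambda)$ upward directed. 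To prove the lattice property I would fix $\varphi\in\w_+$ and a smooth approximation $\eta_\eps$ of the Heaviside function, test the equations satisfied by $u_1$ and $u_2$ against $\varphi\,\eta_\eps(u_1-u_2)$ and $\varphi\,(1-\eta_\eps(u_1-u_2))$ respectively, add, and let $\eps\to0^+$, exploiting the pointwise estimate $(w(x)-w(y))^{p-1}\le(u_i(x)-u_i(y))^{p-1}$ that holds whenever $w(x)=u_i(x)$ and $w(y)\ge u_i(y)$ (a consequence of the strict monotonicity of $t\mapsto t^{p-1}$).

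Given upward directedness, existence of a biggest solution is then almost automatic. By Remark \ref{comp}, $\mathcal{S}(\lambda)$ is compact in $\cs$, so the continuous functional $u\mapsto\int_\Omega u\,dx$ attains its maximum at some $\hat u_\lambda\in\mathcal{S}(\lambda)$. If $\hat u_\lambda$ were not the greatest element, one could pick $v\in\mathcal{S}(\lambda)$ with $v\not\le\hat u_\lambda$, apply the lattice property together with Lemma \ref{sub} to $(\hat u_\lambda,v)$ and produce $u_3\in\mathcal{S}(\lambda)$ with $u_3\ge\max(\hat u_\lambda,v)$, hence $u_3\ge\hat u_\lambda$ and $u_3>\hat u_\lambda$ on the positive measure set $\{v>\hat u_\lambda\}$, forcing $\int_\Omega u_3\,dx>\int_\Omega\hat u_\lambda\,dx$, a contradiction. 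Membership $\hat u_\lambda\in\mathrm{int}(\cs_+)$ is automatic from Proposition \ref{smp}, exactly as in Lemma \ref{str}.

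For the strict monotonicity, fix $\lambda>\mu\ge\lambda_*$. Since $\hat u_\mu>0$ in $\Omega$, $\fpl\hat u_\mu=\mu\hat u_\mu^{q-1}-\hat u_\mu^{r-1}<\lambda\hat u_\mu^{q-1}-\hat u_\mu^{r-1}$ pointwise, so $\hat u_\mu$ is a strict subsolution of $(P_\lambda)$; Lemma \ref{sub} then produces a solution of $(P_\lambda)$ above $\hat u_\mu$, and maximality of $\hat u_\lambda$ forces $\hat u_\lambda\ge\hat u_\mu$. Combining this with the monotonicity of $t\mapsto t^{q-1}$ for $q>1$ yields
\[
\fpl\hat u_\mu+\hat u_\mu^{r-1}=\mu\hat u_\mu^{q-1}<\lambda\hat u_\mu^{q-1}\le\lambda\hat u_\lambda^{q-1}=\fpl\hat u_\lambda+\hat u_\lambda^{r-1}\le K,
\]
with $K=\lambda\|\hat u_\lambda\|_\infty^{q-1}$, whereupon Proposition \ref{scp} with $h(t)=(t^+)^{r-1}$ delivers $\hat u_\lambda-\hat u_\mu\in\mathrm{int}(\cs_+)$.

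The main obstacle is the lattice property itself: for local operators the max-of-two-subsolutions fact is immediate from pointwise considerations at kink points, but for $\fpl$ one must handle the nonlocal symmetric double integral via cutoffs and a careful passage to the limit, keeping track of cross-terms coming from points $(x,y)$ that straddle the set $\{u_1\ge u_2\}$. Every subsequent step is a clean consequence of compactness in $\cs$ and the strong comparison Proposition \ref{scp}.
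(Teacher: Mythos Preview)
Your proposal is correct and follows essentially the same route as the paper: establish that $\mathcal{S}(\lambda)$ is upward directed via the lattice property (max of solutions is a subsolution, then Lemma~\ref{sub}), extract a biggest element from compactness, and deduce strict monotonicity from Proposition~\ref{scp}. The paper cites \cite[Lemma~3.1, Theorem~3.5]{FI} for the lattice property and the existence of a maximal element, whereas you sketch the former directly via cutoffs and obtain the latter by the clean trick of maximizing $u\mapsto\int_\Omega u\,dx$ over the $\cs$-compact set $\mathcal{S}(\lambda)$; both are the same underlying arguments. For the final comparison, the paper first produces an intermediate $u_\lambda\in\mathcal{S}(\lambda)$ with $u_\lambda-\hat u_\mu\in{\rm int}(\cs_+)$ (as in Lemma~\ref{mul}) and then uses $\hat u_\lambda\ge u_\lambda$, while you apply Proposition~\ref{scp} directly to the pair $(\hat u_\lambda,\hat u_\mu)$ once $\hat u_\lambda\ge\hat u_\mu$ is known; your shortcut is legitimate since all hypotheses of Proposition~\ref{scp} are met (in particular $\hat u_\lambda\neq\hat u_\mu$, as equality would force $\lambda\hat u_\mu^{q-1}=\mu\hat u_\mu^{q-1}$).
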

\begin{proof}
Fix $\lambda\ge\lambda_*$. By Lemmas \ref{str}, \ref{mul} we have $\mathcal{S}(\lambda)\neq\emptyset$. In addition, $\mathcal{S}(\lambda)$ is a compact set in both $\w$ and $\cs$ (see Remark \ref{comp}).
\vskip2pt
\noindent
We prove now that $\mathcal{S}(\lambda)$ is upward directed, in the sense of pointwise ordering. Indeed, let $u,v\in\mathcal{S}(\lambda)$, then by \cite[Lemma 3.1]{FI} the function $\max\{u,v\}\in\widetilde{W}^{s,p}(\Omega)$ is a subsolution of $(P_\lambda)$ (the original result is stated for $p\ge 2$, but the proof also holds in the case $1<p<2$). By Lemma \ref{sub}, then, there exists a solution $w\in\mathcal{S}(\lambda)$ s.t.\ in $\Omega$
\[w \ge \max\{u,v\}.\]
Reasoning as in \cite[Theorem 3.5]{FI} (again, the argument holds for all $p>1$ despite the original statement assumes $p\ge 2$), we deduce the existence of $\hat u_\lambda\in\mathcal{S}(\lambda)$ s.t.\ $\hat u_\lambda\ge u$ in $\Omega$, for all $u\in\mathcal{S}(\lambda)$.
\vskip2pt
\noindent
Now fix $\lambda>\mu\ge\lambda_*$. Let $\hat u_\lambda$, $\hat u_\mu$ be the biggest solutions of $(P_\lambda)$, $(P_\mu)$, respectively. We have weakly in $\Omega$
\begin{align*}
\fpl\hat u_\mu &= \mu\hat u_\mu^{q-1}-\hat u_\mu^{r-1} \\
&< \lambda\hat u_\mu^{q-1}-\hat u_\mu^{r-1},
\end{align*}
i.e., $\hat u_\mu\in{\rm int}(\cs_+)$ is a strict subsolution of $(P_\lambda)$. Reasoning as in Lemmas \ref{sub}, \ref{mul} we find a solution $u_\lambda\in\mathcal{S}(\lambda)$ s.t.\ $u_\lambda-\hat u_\mu\in{\rm int}(\cs_+)$. By maximality of $\hat u_\lambda$, we have in $\Omega$
\[\hat u_\lambda \ge u_\lambda > \hat u_\mu,\]
which implies $\hat u_\lambda-\hat u_\mu\in{\rm int}(\cs_+)$.
\end{proof}

\noindent
We can now prove our main result:
\vskip4pt
\noindent
{\em Proof of Theorem \ref{pqr}.} Let $\lambda_*=\inf\,\Lambda>0$ defined as above. Such definition yields \ref{pqr1}. By Lemma \ref{str} we have \ref{pqr2}. By Lemma \ref{mul}, for all $\lambda>\lambda_*$ there exist two solutions $u_\lambda,v_\lambda$ of $(P_\lambda)$ s.t.\ $u_\lambda-v_\lambda\in{\rm int}(\cs_+)$, and by Lemma \ref{big} we may take $u_\lambda=\hat u_\lambda$, the biggest solution of $(P_\lambda)$, without loss of generality, achieving \ref{pqr3}. Finally, from the second statement of Lemma \ref{big} we have \ref{pqr4}. \qed

\vskip4pt
\noindent
{\bf Acknowledgement.} Both authors are members of GNAMPA (Gruppo Nazionale per l'Analisi Matematica, la Probabilit\`a e le loro Applicazioni) of INdAM (Istituto Nazionale di Alta Matematica 'Francesco Severi') and supported by the the research project {\em Problemi non locali di tipo stazionario ed evolutivo} (GNAMPA, CUP E53C23001670001). S.M.\ is also  partially supported by the projects PIACERI linea 2/3 of the University of Catania and PRIN project 2022ZXZTN2.

\end{document}